\documentclass[11pt,twoside]{article}
\usepackage{amsmath}
\usepackage{amsfonts}
\usepackage{amsthm,amssymb}
\usepackage{graphicx, color}

\usepackage{indentfirst}

\usepackage[nosort]{cite}
\usepackage{color}
\usepackage{xcolor}%
\usepackage{makeidx}

\textwidth 15cm
\textheight 22.5cm
\headheight 15pt
\headsep 0.2in
\oddsidemargin 0.5cm
\evensidemargin 0.5cm
\topmargin 0cm

\setcounter{page}{1} \thispagestyle{empty}
\newtheorem{theorem}{Theorem}

\newtheorem{lemma}[theorem]{Lemma}
\newtheorem{proposition}[theorem]{Proposition}
\newtheorem{remark}[theorem]{Remark}

\newtheorem{definition}[theorem]{Definition}

\pretolerance=10000
\begin{document}
\vskip 1cm
\begin{center}
\vskip 0.1cm {\bf\Large Existence and profile of ground-state solutions to a $1-$Laplacian problem in $\mathbb{R}^N$}
\end{center}
\vskip 0.3cm
\begin{center}

{\sc Claudianor O. Alves$^{1,*}$, Giovany M. Figueiredo$^2$ and  Marcos T. O. Pimenta$^{3}$,}
\\
\vspace{0.5cm}

1. Unidade Acad\^emica de Matem\'atica e Estat\'istica\\ Universidade Federal de Campina Grande \\
58429-900 - Campina Grande - PB , Brazil \\

2.  Departamento de Matem\'atica\\ Universidade de Bras\'ilia-UNB \\
70910-900 - Bras\'ilia - DF , Brazil \\ 

3. Departamento de Matem\'atica e Computa\c{c}\~ao\\ Universidade Estadual Paulista (Unesp), Faculdade de Ci\^encias e Tecnologia\\
19060-900 - Presidente Prudente - SP, Brazil, \\
* corresponding author
\medskip

E-mail addresses: coalves@mat.ufcg.edu.br, giovany@unb.br, pimenta@fct.unesp.br

 \end{center}
\vskip 0.5cm

\begin{abstract}
In this work we prove the existence of ground state solutions for the following class of problems  
\begin{equation*}
\left\{
\begin{array}{ll}
\displaystyle - \Delta_1 u + (1 + \lambda V(x))\frac{u}{|u|} & = f(u),  \quad x \in \mathbb{R}^N, \\
u \in BV(\mathbb{R}^N), &
\end{array} \right.
\label{Pintro}
\end{equation*}
\end{abstract}
where $\lambda > 0$, $\Delta_1$ denotes the $1-$Laplacian operator which is formally defined by $\Delta_1 u = \mbox{div}(\nabla u/|\nabla u|)$, $V:\mathbb{R}^N \to \mathbb{R}$ is a potential satisfying some conditions and $f:\mathbb{R} \to \mathbb{R}$ is a subcritical and superlinear nonlinearity. We prove that for $\lambda > 0$ large enough there exists ground-state solutions and, as $\lambda \to +\infty$, such solutions converges to a ground-state solution of the limit problem in $\Omega = \mbox{int}( V^{-1}(\{0\}))$.

\vskip 1.5cm

\noindent{{\bf Key Words}. Bounded variation functions, 1-Laplacian operator, concentration results.}
\newline
\noindent{{\bf AMS Classification.} 35J62, 35J93.} \vskip 0.4cm

\section{Introduction}

\hspace{.5cm} 

Let us consider the following class of quasilinear elliptic problem
$$
\left\{
\begin{array}{ll}
\displaystyle - \Delta_1 u + (1 + \lambda V(x))\frac{u}{|u|} & = f(u),  \quad x \in \mathbb{R}^N, \\
 u \in BV(\mathbb{R}^N), &
\end{array} \right.
\eqno{(P)_\lambda}
$$
where $\lambda > 0$, $N \geq 2$, the operator $\Delta_1$ is the well known $1-$Laplacian operator, whose formal definition is given by $\displaystyle \Delta_1 u = \mbox{div}\left(\frac{\nabla u}{|\nabla u|}\right)$. On the nonlinearity $f$ we assume the following conditions:
\begin{itemize}
\item [$(f_1)$] $f \in C(\mathbb{R})$;
\item [$(f_2)$] $f(s) = o(1)$ as $s \to 0$;
\item [$(f_3)$] There exist constants $c_1, c_2 > 0$ and $p \in (1,1^*)$ such that
$$
|f(s)| \leq c_1 + c_2|s|^{p-1}, \quad \forall s \in \mathbb{R};
$$
\item [$(f_4)$] There exists $\theta > 1 $ such that $$0 < \theta F(s) \leq f(s)s, \quad \mbox{for $s \neq 0$},$$
where $F(s) = \int_0^s f(t)dt$;
\item [$(f_5)$] $f$ is increasing.
\end{itemize}

The potential $V$ is going to be considered satisfying the following conditions:
\begin{itemize}
\item [$(V_1)$] $V(x) \geq 0$, $\forall x \in \mathbb{R}^N$;
\item [$(V_2)$] There exists $M_0 > 0$ such that $|\{x \in \mathbb{R}^N; \, V(x) \leq M_0\}| < +\infty$,
where $|A|$ denotes the Lebesgue measure of a mensurable set $A \subset \mathbb{R}^N$.
\item [$(V_3)$] $\Omega = \mbox{int}(V^{-1}(\{0\})) \neq \emptyset$.
\end{itemize}

Several recent studies have focused on the nonlinear Schr\"{o}dinger equation with deep potential well
\begin{equation} \label{LL2}
- \Delta u + (\lambda a(x)+b(x))u = |u|^{p-2}u \,\,\,  \mbox{ in } \,\,\, \mathbb{R}^N,
\end{equation}
where $a(x), b(x)$ are suitable continuous functions and $ p \in (2, \frac{2N}{N-2} ) $ if $ N \ge 3 $; $ p \in (1, \infty) $ if $ N = 1, 2 $.
In \cite{BW2}, for $b(x)=1$, Bartsch and Wang proved the existence of a least energy solution for $\lambda$ large enough and that the sequence of solutions converges strongly to a least energy solution for a problem in a bounded domain. They also showed the existence of at least cat$\Omega$ positive solutions for large $\lambda$, where $ \Omega = \mbox{int} (a^{-1}(0))$,  and $p$ is close to the critical exponent. In \cite{CD}, Clapp and Ding study the existence of nodal solutions that change sign exactly once, considering the critical growth case. We also refer to \cite{BPW} for nonconstant $b(x)>0$, where the authors prove the existence of $k$ solutions that may change sign for any $k$ and $\lambda$ large enough. For other results related to Schr\"odinger equations with deep potential well, we may refer the readers to \cite{DT, ST, SZ, WZ}.

Motivated by the above references our intention is to prove that some of these results hold for problem $(P)_\lambda$. The main difficulties arise mainly because of the following facts:
\begin{itemize}
\item The lack of smoothness on the energy functional associated to $(P)_\lambda$;
\item The lack of reflexiveness on $BV(\mathbb{R})$, which is the functional space we are going to work with;
\item The difficulty in adapting well known technical results and estimates to our framework, taking into account the way in which we are going to define the sense of solutions.
\end{itemize}

We would like point out that there is in the literature few papers involving the $1$-Laplacian operator in the whole $\mathbb{R}^N$. In fact the authors know only the papers due to Alves and Pimenta \cite{AP} and Figueiredo and Pimenta \cite{FigueiredoPimentaStrauss, FigueiredoPimentaVanishing}. In \cite{AP}, Alves and Pimenta have studied the existence and concentration of solution for the following class of problem 
\begin{equation*}
\left\{
\begin{array}{rr}
\displaystyle - \epsilon \Delta_1 u + V(x)\frac{u}{|u|} & = f(u) \quad \mbox{in $\mathbb{R}^N$,}\\
& u \in BV(\mathbb{R}^N),
\end{array} \right.
\label{Pintro}
\end{equation*}
where $\epsilon>0$ and $V,f$ are continuous functions that satisfy some technical conditions. Actually $f$ has a subcritical growth and $V$ verifies the condition
$$
\liminf_{|z| \rightarrow \infty} V(z) > \inf_{z \in 	\mathbb{R}^N}V(z)=V_{0} >0.
$$ 
In \cite{FigueiredoPimentaStrauss}, Figueiredo and Pimenta has obtained the existence of radially symmetric solutions when $V=1$, by working with the space of radially symmetric $BV$ functions, which is proved to be embedded in $L^q(\mathbb{R}^N)$, for all $q \in (1,1^*)$. In \cite{FigueiredoPimentaVanishing} the same authors shown the existence of ground-state bounded variation solutions for a problem involving the $1-$Laplacian operator and vanishing potentials.

In this work our main result is the following.

\begin{theorem}
Suppose that $f$ satisfies  $(f_1) - (f_5)$ and that $V$ satisfies $(V_1) - (V_3)$, then there exists $\lambda^* > 0$ such that $(P)_\lambda$ has a ground-state bounded variation solution $u^\lambda$ for all $\lambda \geq \lambda^*$. Moreover, there exists $u_\Omega \in BV(\mathbb{R}^N)$ such that, if $\lambda_n \to +\infty$, up to a subsequence, $u_{\lambda_n} \to u_\Omega$ in $L^q_{loc}(\mathbb{R}^N)$, for $1 \leq q < 1^*$ and
$$
\|u_n\|_{\lambda_n} - \|u_\Omega\|_\Omega \to 0, \quad \mbox{as $n \to +\infty$,}
$$
where $u_\Omega \equiv 0$ a.e. in $\mathbb{R}^N\backslash\Omega$ and $u_\Omega$ is a bounded variation solution of 
\begin{equation}
\left\{
\begin{array}{rl}
\displaystyle -\Delta_1 u + \frac{u}{|u|} = f(u) & \mbox{in $\Omega$},\\
u = 0 & \mbox{on $\partial\Omega$.}
\end{array}
\right.
\label{limitproblem}
\end{equation}
\label{maintheorem}
\end{theorem}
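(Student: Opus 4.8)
The plan is to follow the classical penalization/variational scheme of Bartsch--Wang, but carried out in the nonsmooth $BV$ setting, where the energy functional
$$
I_\lambda(u) = \int_{\mathbb{R}^N} |Du| + \int_{\mathbb{R}^N}(1+\lambda V(x))|u|\,dx - \int_{\mathbb{R}^N} F(u)\,dx
$$
is only locally Lipschitz (in fact, a sum of a convex lower-semicontinuous part and a $C^1$ part). First I would set up the working space $X_\lambda = \{u \in BV(\mathbb{R}^N) : \int V(x)|u|\,dx < \infty\}$ with norm $\|u\|_\lambda = \int|Du| + \int(1+\lambda V(x))|u|\,dx$, and use $(V_1)$--$(V_2)$ together with the $BV$ embedding into $L^{1^*}$ to show that, for $\lambda$ large, this norm controls $\|u\|_{L^q}$ uniformly in $\lambda \geq 1$ for $q \in [1,1^*]$; the point here is a Hölder/interpolation argument splitting $\mathbb{R}^N$ into $\{V \leq M_0\}$ (finite measure) and its complement (where $\lambda V \geq \lambda M_0$ is large). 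This uniform embedding is what will later allow passing to the limit $\lambda_n \to \infty$ without the constants degenerating.

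Next I would establish, for each fixed $\lambda$ (large), that $I_\lambda$ has the mountain-pass geometry: $(f_2)$--$(f_3)$ give a uniform bound below on small spheres, and $(f_4)$ (Ambrosetti--Rabinowitz with exponent $\theta > 1$, which is what is available for the $1$-homogeneous gradient term — note the subtlety that one only needs $\theta > 1$, not $\theta > N/(N-1)$, because the leading term is already $1$-homogeneous) gives a direction along which $I_\lambda \to -\infty$. One then defines the ground-state level
$$
c_\lambda = \inf_{u \in \mathcal{N}_\lambda} I_\lambda(u)
$$
over the (nonsmooth) Nehari-type set, or equivalently the mountain-pass level, and shows $c_\lambda > 0$. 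The existence of a minimizer requires a careful compactness argument: a minimizing (Cerami/PS) sequence is bounded in $X_\lambda$ by $(f_4)$, and $(V_2)$ rules out vanishing/escape to infinity (concentration-compactness, using that mass cannot sit where $\lambda V$ is large), so the weak-$*$ $BV$ limit is nontrivial; lower semicontinuity of $u \mapsto \int |Du| + \int(1+\lambda V)|u|$ under weak-$*$ convergence, together with strong $L^q_{loc}$ convergence for the $F$-term, then shows the limit realizes $c_\lambda$ and is a bounded-variation solution in the appropriate sense (the one defined earlier in the paper via the De Giorgi pairing / sub-differential inclusion).

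For the concentration statement, take $\lambda_n \to \infty$ and $u_n = u^{\lambda_n}$. First show $\limsup_n c_{\lambda_n} \leq c_\Omega$, the ground-state level of \eqref{limitproblem}, by using any fixed test function supported in $\Omega$ (on which $V \equiv 0$, so $\|\cdot\|_{\lambda_n}$ restricted there is $\lambda_n$-independent). Boundedness of $(u_n)$ in $BV(\mathbb{R}^N)$ (uniform in $n$, from the uniform embedding of Step 1 and $(f_4)$) yields a weak-$*$ limit $u_\Omega$; the key localization is that $\int(1+\lambda_n V)|u_n| \leq C$ forces $\lambda_n \int V|u_n| \leq C$, hence $\int_{\{V>0\}} V|u_\Omega| = 0$, i.e. $u_\Omega = 0$ a.e. outside $\overline{V^{-1}(0)}$, and one upgrades this to $u_\Omega = 0$ a.e. in $\mathbb{R}^N \setminus \Omega$ using that the boundary has measure zero under $(V_2)$-type control. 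Then $u_\Omega$ is an admissible competitor for the $\Omega$-problem, lower semicontinuity gives $I_\Omega(u_\Omega) \leq \liminf I_{\lambda_n}(u_n) = \liminf c_{\lambda_n} \leq c_\Omega$, forcing equality, $u_\Omega$ to be a ground state of \eqref{limitproblem}, and — by the standard argument that lsc inequalities becoming equalities forces convergence of the norms — $\|u_n\|_{\lambda_n} - \|u_\Omega\|_\Omega \to 0$, with $u_n \to u_\Omega$ strongly in $L^q_{loc}$.

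The main obstacle I expect is the compactness/Palais--Smale analysis in the nonsmooth $BV$ framework: because $BV(\mathbb{R}^N)$ is not reflexive and the total-variation term is only lsc (not continuous) under weak-$*$ convergence, one cannot directly test the Euler--Lagrange inclusion against the weak limit, and the ``trace at infinity'' phenomenon (mass leaking to the boundary of a ball, which for $BV$ shows up as a concentrated part of $|Du|$ on sphere boundaries) must be controlled. Handling this — likely via a truncation/cutoff argument combined with the $(V_2)$ decay of admissible mass at infinity, and a careful use of the weak-$*$ lower semicontinuity together with the precise notion of solution adopted in the paper (the pairing $(z, Du)$ with a vector field $z \in L^\infty$, $\|z\|_\infty \leq 1$, $\mathrm{div}\, z \in L^N$) — is where the real work lies; the rest is a fairly faithful, if technically delicate, transcription of the Bartsch--Wang deep-well argument to $1$-Laplacian problems.
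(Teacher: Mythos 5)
Your overall route coincides with the paper's: deep-well penalization in the space $E_\lambda$, mountain-pass geometry from $(f_2)$--$(f_4)$, boundedness of the Palais--Smale sequences, the tail estimate obtained by splitting $\{V\ge M_0\}$ and $\{V<M_0\}$ via $(V_2)$, localization of the limit through $\lambda_n\int_{\mathbb{R}^N} V|u_n|\,dx\le C$, and the final squeeze between $c_\lambda\le c_\Omega$ and $d\ge c_\Omega$. The genuine gaps sit exactly at the two limit passages that your sketch delegates to ``lower semicontinuity''. First, at fixed $\lambda$: the notion of solution is the global inequality $\|v\|_\lambda-\|u^\lambda\|_\lambda\ge\int_{\mathbb{R}^N} f(u^\lambda)(v-u^\lambda)\,dx$ for \emph{all} $v\in E_\lambda$, and to pass to the limit in (\ref{MP2}) one needs $f(u_n^\lambda)u_n^\lambda\to f(u^\lambda)u^\lambda$ and $F(u_n^\lambda)\to F(u^\lambda)$ in $L^1(\mathbb{R}^N)$; $L^q_{loc}$ convergence plus lower semicontinuity of the norm does not give this, and identifying the limit as a \emph{ground state} further requires the Nehari characterization of $c_\lambda$, which rests on $(f_5)$ --- a hypothesis your proposal never uses. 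The paper produces these ingredients by first proving $\Phi_\lambda'(u^\lambda)u^\lambda\le 0$ (Lemma \ref{lemmaNehari}, a cutoff argument with $\varphi_R u_n^\lambda$ that needs the directional-derivative formula (\ref{Jlinha}) and a treatment of the singular part of $Du_n^\lambda$), then projecting onto $\mathcal{N}_\lambda$ and running a Fatou argument with the monotonicity of $s\mapsto f(s)s-F(s)$ (Lemmas \ref{lemmaincreasing} and \ref{lemmatlambda}) to force $t_\lambda=1$, $\Psi_\lambda(u^\lambda)=c_\lambda$ and the strong convergences, including $\|u_n^\lambda\|_\lambda\to\|u^\lambda\|_\lambda$.

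Second, in the concentration step the asserted inequality $\Phi_\Omega(u_\Omega)\le\liminf_n\Psi_{\lambda_n}(u_n)$ is not a consequence of lower semicontinuity as stated: the limit functional carries the relaxed Dirichlet term $\int_{\partial\Omega}|u_\Omega|\,d\mathcal{H}_{N-1}$, the trace is not continuous (nor lower semicontinuous) under $L^q_{loc}$ convergence of functions defined on all of $\mathbb{R}^N$, and the term $-\int F(u_n)$ enters with the wrong sign for semicontinuity. Moreover, even granting such an inequality, $\Phi_\Omega(u_\Omega)\le c_\Omega$ by itself does not make $u_\Omega$ a ground state: you must still show $u_\Omega\neq 0$, that $u_\Omega$ satisfies the variational inequality of \eqref{limitproblem} (boundary term included), and that its level is at least $c_\Omega$. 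The paper achieves all of this through the Green formula (\ref{greenformula}) for zero extensions --- which is also what yields the sharp bound $c_\lambda\le c_\Omega$, whereas testing only with functions compactly supported in $\Omega$, as you propose, bounds $c_\lambda$ by a possibly larger min-max level --- together with the cutoffs $\varphi_\delta$ concentrated near $\Omega$ giving $\Phi_\Omega'(u_\Omega)u_\Omega\le 0$, the projection onto $\mathcal{N}_\Omega$, and once more the Fatou/monotonicity argument to conclude $d\ge c_\Omega$, $u_\Omega\in\mathcal{N}_\Omega$, $\|u_n\|_{\lambda_n}\to\|u_\Omega\|_\Omega$, and finally the limit variational inequality via the identity $\|\overline{v}\|_{\lambda_n}=\|v\|_\Omega$. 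These steps constitute the substance of the proof and are missing from, not merely compressed in, your proposal.
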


Some words about the limit problem (\ref{limitproblem}) are in oder, mainly because the way in which we are going to consider the Dirichlet boundary condition. Note that $u \in BV(\Omega)$ is a bounded variation solution of (\ref{limitproblem}) if
\begin{equation}
\|v\|_\Omega - \|u\|_\Omega \geq \int_{\Omega}f(u)(v - u), \quad \forall v \in BV(\Omega),
\label{BVsolutionlimitproblem}
\end{equation}
where $\|u\|_\Omega = \int_{\Omega}|Du| + \int_{\Omega}|u|dx + \int_{\partial\Omega}|u|d\mathcal{H}_{N-1}$. Since the trace operator from $BV(\Omega)$ into $L^1(\partial\Omega)$ does not have good properties of continuity w.r.t. the $L^q(\Omega)$ convergence, it is completely useless trying to impose the boundary condition in the space, working on $BV_0(\Omega) = \{u \in BV(\Omega); \, \, u = 0 \, \, \mbox{on} \, \, \partial \Omega\}$. In fact our approach follows what is usually done in the literature on $1-$Laplacian problems with Dirichlet boundary conditions, i.e., imposing the boundary conditions by considering the term $\int_{\partial\Omega}|u|d\mathcal{H}_{N-1}$ in the energy functional, where $\mathcal{H}_{N-1}$ denotes the $(N-1)-$dimensional Housdorff measure in $\mathbb{R}^N$.

The paper is organized as follows. In Section 2, we recall some properties involving the space $BV(\mathbb{R}^N)$ and prove some properties of the energy functional associated with the problem. In Section 3, we prove the existence of ground state for $\lambda$ large enough. In the last section we study the concentration arguments and the profile of the solutions as $\lambda \to +\infty$.

\section{Preliminary results}

Let us introduce the space of functions of bounded variation defined by
$$BV(\mathbb{R}^N) = \left\{u \in L^1(\mathbb{R}^N); \, Du \in \mathcal{M}(\mathbb{R}^N,\mathbb{R}^N)\right\}.$$
It can be proved that $u \in BV(\mathbb{R}^N)$ is equivalent to $u \in L^1(\mathbb{R}^N)$ and
$$\int_{\mathbb{R}^N} |Du| := \sup\left\{\int_{\mathbb{R}^N} u \mbox{div}\phi dx; \, \, \phi \in C^1_c(\mathbb{R}^N,\mathbb{R}^N), \, \mbox{s.t.} \, \, |\phi|_\infty \leq 1\right\} < +\infty.$$

The space $BV(\mathbb{R}^N)$ is a Banach space when endowed with the norm
$$\|u\| := \int_{\mathbb{R}^N} |Du| + |u|_1,$$
which is continuously embedded into $L^r(\mathbb{R}^N)$ for all $\displaystyle r \in \left[1,1^*\right]$.

As one can see in \cite{Buttazzo}, the space $BV(\mathbb{R}^N)$ has different convergence and density properties than the usual Sobolev spaces. For example, $C^\infty_0(\mathbb{R}^N)$ is not dense in $BV(\mathbb{R}^N)$ with respect to the strong convergence, since the closure of $C^\infty_0(\mathbb{R}^N)$ in the norm of $BV(\mathbb{R}^N)$ is equal to $W^{1,1}(\mathbb{R}^N)$, which is a proper subspace of $BV(\mathbb{R}^N)$. This has motivated people to define a weaker sense of convergence in $BV(\mathbb{R}^N)$, called {\it intermediate convergence}. We say that $(u_n) \subset BV(\mathbb{R}^N)$ converge to $u \in BV(\mathbb{R}^N)$ in the sense of the intermediate convergence if 
$$
u_n \to u, \quad \mbox{in $L^1(\mathbb{R}^N)$}
$$
and
$$
\int_{\mathbb{R}^N}|Du_n| \to \int_{\mathbb{R}^N}|Du|,
$$
as $n \to \infty$. Fortunately, with respect to the intermediate convergente, $C^\infty_0(\mathbb{R}^N)$ is dense in $BV(\mathbb{R}^N)$. This fact is going to be used later.

For a vectorial Radon measure $\mu \in \mathcal{M}(\mathbb{R}^N,\mathbb{R}^N)$, we denote by $\mu = \mu^a + \mu^s$ the usual decomposition stated in the Radon Nikodyn Theorem, where $\mu^a$ and $\mu^s$ are, respectively, the absolute continuous and the singular parts with respect to the $N-$dimensional Lebesgue measure $\mathcal{L}^N$. We denote by $|\mu|$, the absolute value of $\mu$, the scalar Radon measure defined as in \cite{Buttazzo}[pg. 125]. By $\displaystyle \frac{\mu}{|\mu|}(x)$ we denote the usual Lebesgue derivative of $\mu$ with respect to $|\mu|$, given by
$$\frac{\mu}{|\mu|}(x) = \lim_{r \to 0}\frac{\mu(B_r(x))}{|\mu|(B_r(x))}.$$ 

It can be proved that $\mathcal{J}: BV(\mathbb{R}^N) \to \mathbb{R}$, given by
\begin{equation}
\mathcal{J}(u) = \int_{\mathbb{R}^N} |Du| + \int_{\mathbb{R}^N} |u|dx,
\label{J}
\end{equation}
is a convex functional and Lipschitz continuous in its domain. It is also well know that $\mathcal{J}$ is lower semicontinuous with respect to the $L^r(\mathbb{R}^N)$ topology, for $r \in [1,1^*]$ (see \cite{Giusti} for example). Although non-smooth, the functional $\mathcal{J}$ admits some directional derivatives. More specifically, as is shown in \cite{Anzellotti}, given $u \in BV(\mathbb{R}^N)$, for all $v \in BV(\mathbb{R}^N)$ such that $(Dv)^s$ is absolutely continuous w.r.t. $(Du)^s$ and such that $v$ is equal to $0$ a.e. in the set where $u$ vanishes, it follows that
\begin{equation}
\mathcal{J}'(u)v = \int_{\mathbb{R}^N} \frac{(Du)^a(Dv)^a}{|(Du)^a|}dx + \int_{\mathbb{R}^N} \frac{Du}{|Du|}(x)\frac{Dv}{|Dv|}(x)|(Dv)|^s + \int_{\mathbb{R}^N}\mbox{sgn}(u) v dx,
\label{Jlinha}
\end{equation}
where $\mbox{sgn}(u(x)) = 0$ if $u(x) = 0$ and $\mbox{sgn}(u(x)) = u(x)/|u(x)|$ if $u(x) \neq 0$.
In particular, note that, for all $u \in BV(\mathbb{R}^N)$,
\begin{equation}
\mathcal{J}'(u)u = \mathcal{J}(u).
\label{derivadaJ}
\end{equation}

We have also that $BV(\mathbb{R}^N)$ is a {\it lattice}, i.e., if $u,v \in BV(\mathbb{R}^N)$, then $\max\{u,v\}, \min\{u,v\} \in BV(\mathbb{R}^N)$ and also
\begin{equation}
\mathcal{J}(\max\{u,v\}) + \mathcal{J}(\min\{u,v\}) \leq \mathcal{J}(u) + \mathcal{J}(v), \quad \forall u,v \in BV(\mathbb{R}^N).
\label{lattice}
\end{equation}

Let us denote
$$
E_\lambda = \left\{u \in BV(\mathbb{R}^N); \, \int_{\mathbb{R}^N}(1 + \lambda V(x))|u| dx < +\infty \right\},
$$
the subspace of $BV(\mathbb{R}^N)$ endowed with the following norm
$$
\|v\|_\lambda := \int_{\mathbb{R}^N}|Dv| + \int_{\mathbb{R}^N}(1 + \lambda)V(x)|v|dx.
$$
Note that the embedding $E_\lambda \hookrightarrow BV(\mathbb{R}^N)$ is continuous in such a way that $E_\lambda$ is a Banach space that is continuously embedded into $L^q(\mathbb{R}^N)$, for all $q \in [1,1^*]$.

Let us define the functionals $\Phi_\lambda, \Psi_\lambda, \Phi_F: E_\lambda \to \mathbb{R}$ given by
$$
\Phi_\lambda(u) = \|u\|_\lambda,
$$
$$
\Phi_F(u) = \int_{\mathbb{R}^N}F(u)dx
$$
and
$$
\Psi_\lambda(u) = \Phi_\lambda(u) - \int_{\mathbb{R}^N}F(v)dx.
$$

Note that $\Psi_\lambda$ is written as the difference of a convex locally Lipschitz functional $\Phi_\lambda$, and a $C^1(E)$ one, $\Phi_F$. Then we can use the theory of subdifferentials of Clarke \cite{Clarke} to say that $u^\lambda$ is a critical point of $\Psi_\lambda$ if $0 \in \partial \Psi_\lambda(u^\lambda)$, where $\partial \Psi_\lambda(u^\lambda)$ denotes the subdifferential of $\Psi_\lambda$ in $u^\lambda$. This, in turn, is equivalent to $\Phi_F'(u^\lambda) \in \partial \Phi_\lambda(u^\lambda)$, which is equivalent to
\begin{equation}
\|v\|_\lambda - \|u^\lambda\|_\lambda \geq \int_{\mathbb{R}^N}f(u^\lambda)(v - u^\lambda), \quad \forall v\in E_\lambda.
\label{eqsolution}
\end{equation}

\subsection{The Euler-Lagrange equation}

Since $(P)_\lambda$ contains expressions that doesn't make sense when $\nabla u = 0$ or $u = 0$, then it can be understood just as the formal version of the Euler-Lagrange equation associated to the functional $\Psi_\lambda$. In this section we present the precise form of an Euler-Lagrange equation satisfied by all bounded variation critical points of $\Psi_\lambda$. In order to do so we closely follow the arguments in \cite{Kawohl}, however we have introduced new ideas, because we are working in whole $\mathbb{R}^N$.

The first step is to consider the extension of the functionals $\Phi_\lambda, \Phi_F$ and $\Psi_\lambda$ to $X=L^{1}(\mathbb{R}^N) \cap L^{\frac{N}{N-1}}(\mathbb{R}^N)$ endowed with the norm $\|w\|_{X}=|w|_{1}+|w|_{\frac{N}{N-1}}$, given respectively by $\overline{\Phi_\lambda}, \overline{\Phi_F}, \overline{\Psi}_\lambda: X \to \mathbb{R}\cup \{+\infty\}$, where
$$
\overline{\Phi_\lambda}(v) = 
\left\{
\begin{array}{ll}
\Phi_\lambda(v), & \mbox{if $v \in E_\lambda$},\\
+\infty, & \mbox{if $v \in X\backslash E_\lambda$},
\end{array}
\right.
$$
$$
\overline{\Phi_F}(u) = \int_{\mathbb{R}^N}F(u)dx
$$
and $\overline{\Psi}_\lambda = \overline{\Phi_\lambda} - \overline{\Phi_F}$. It is easy to see that $\overline{\Phi_F}$ belongs to $C^1(X, \mathbb{R})$ and that $\overline{\Phi_\lambda}$ is a convex lower semicontinuous functional defined in $X$. Hence the subdifferential (in the sense of \cite{Szulkin}) of $\overline{\Phi_\lambda}$, denoted by $\partial \overline{\Phi_\lambda}$, is well defined. The following is a crucial result in obtaining an Euler-Lagrange equation satisfied by the critical points of $\Psi_\lambda$.

\begin{lemma}
If  $u^\lambda \in BV(\mathbb{R}^N)$ is such that $0 \in \partial \Psi_\lambda(u^\lambda)$, then $0 \in \partial \overline{\Psi_\lambda}(u^\lambda)$.
\end{lemma}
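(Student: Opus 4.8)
\emph{Plan of proof.} The strategy is to rewrite both the hypothesis and the conclusion as convex subdifferential inequalities that differ only in their class of admissible test functions, and then to bridge that gap. By the discussion preceding the statement, the hypothesis $0 \in \partial \Psi_\lambda(u^\lambda)$ presupposes $u^\lambda \in E_\lambda$ (the domain of $\Psi_\lambda$) and is equivalent to \eqref{eqsolution}. On the other hand, since $\overline{\Phi_F} \in C^1(X,\mathbb{R})$ and $\overline{\Phi_\lambda}$ is convex and lower semicontinuous on $X$, writing $\overline{\Psi_\lambda} = \overline{\Phi_\lambda} - \overline{\Phi_F}$, the conclusion $0 \in \partial \overline{\Psi_\lambda}(u^\lambda)$ is, in the sense of \cite{Szulkin}, equivalent to $\overline{\Phi_F}'(u^\lambda) \in \partial \overline{\Phi_\lambda}(u^\lambda)$, that is, to
\begin{equation*}
\overline{\Phi_\lambda}(w) - \overline{\Phi_\lambda}(u^\lambda) \geq \int_{\mathbb{R}^N} f(u^\lambda)(w - u^\lambda)\,dx, \qquad \forall\, w \in X .
\end{equation*}
Thus it only remains to promote the admissible test functions in \eqref{eqsolution} from $E_\lambda$ to all of $X$.

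I would first note that $\overline{\Phi_\lambda}(u^\lambda) = \|u^\lambda\|_\lambda < +\infty$ because $u^\lambda \in E_\lambda$, and then verify the displayed inequality by splitting on whether $w \in E_\lambda$ or $w \in X \setminus E_\lambda$. In the first case $\overline{\Phi_\lambda}(w) = \|w\|_\lambda$, and the inequality is exactly \eqref{eqsolution} with $v = w$. In the second case $\overline{\Phi_\lambda}(w) = +\infty$, so the left-hand side equals $+\infty$ and the inequality holds trivially, provided the right-hand side is a well-defined real number.

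Hence the only substantive point is that $\int_{\mathbb{R}^N} f(u^\lambda)(w-u^\lambda)\,dx$ is finite for every $w \in X$, equivalently $f(u^\lambda) \in X^* = L^\infty(\mathbb{R}^N) + L^N(\mathbb{R}^N)$; this is in fact already encoded in $\overline{\Phi_F} \in C^1(X,\mathbb{R})$, but one may also check it directly from $(f_3)$: on the set where $|u^\lambda| \leq 1$ the function $f(u^\lambda)$ is bounded, while on the set where $|u^\lambda| > 1$ one has $|f(u^\lambda)| \leq (c_1+c_2)|u^\lambda|^{p-1}$, hence $|f(u^\lambda)|^N \leq (c_1+c_2)^N |u^\lambda|^{N(p-1)} \leq (c_1+c_2)^N |u^\lambda|^{1^*}$, since $p < 1^*$ forces $N(p-1) < 1^*$; as $u^\lambda \in BV(\mathbb{R}^N) \hookrightarrow L^{1^*}(\mathbb{R}^N)$, the restriction of $f(u^\lambda)$ to that set lies in $L^N(\mathbb{R}^N)$, and adding the two pieces gives $f(u^\lambda) \in X^*$.

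In short, the argument is a restriction/extension observation for a convex inequality and presents no essential difficulty; the only nontrivial --- and, as just noted, already available --- ingredient is the integrability estimate above, which matches the subcritical exponent $p$ against the critical Sobolev exponent $1^* = N/(N-1)$ through $(f_3)$ and the embedding $BV(\mathbb{R}^N) \hookrightarrow L^q(\mathbb{R}^N)$, $q \in [1,1^*]$. I expect that to be the closest thing to an obstacle.
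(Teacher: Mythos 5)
Your proposal is correct and follows essentially the same route as the paper: both reduce the claim to the subdifferential inequality over $X$ and split into the cases $w \in E_\lambda$ (where it is exactly \eqref{eqsolution}) and $w \in X\setminus E_\lambda$ (where $\overline{\Phi_\lambda}(w)=+\infty$ makes the inequality trivial). Your additional check that $f(u^\lambda)\in L^\infty(\mathbb{R}^N)+L^N(\mathbb{R}^N)$, so that the right-hand side is finite, is a detail the paper leaves implicit in $\overline{\Phi_F}\in C^1(X,\mathbb{R})$, and it is carried out correctly.
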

\begin{proof}
Suppose that $0 \in \partial \Psi_\lambda(u^\lambda)$, i.e., that $u^\lambda$ satisfies (\ref{eqsolution}).
We would like to prove that
$$
\overline{\Phi_\lambda}(v) - \overline{\Phi_\lambda}(u^\lambda) \geq \overline{\Phi_F}\, '(u^\lambda)(v - u^\lambda), \quad \forall v \in X.
$$
To see why, consider $v \in X$ and note that:
\begin{itemize}
\item if $v \in E_\lambda \cap X$, then
\begin{eqnarray*}
\overline{\Phi_\lambda}(v) - \overline{\Phi_\lambda}(u^\lambda) & = & \Phi_\lambda(v) - \Phi_\lambda(u^\lambda)\\
& \geq & \Phi_F'(u^\lambda)(v - u^\lambda)\\
& = & \int_{\mathbb{R}^N}f(u^\lambda)(v - u^\lambda)dx\\
& = & \overline{\Phi_F}\, '(u^\lambda)(v - u^\lambda);
\end{eqnarray*}

\item if $v \in X\backslash E_\lambda$, since $\overline{\Phi_\lambda}(v) = +\infty$ and $\overline{\Phi_\lambda}(u^\lambda) < +\infty$, it follows that
\begin{eqnarray*}
\overline{\Phi_\lambda}(v) - \overline{\Phi_\lambda}(u^\lambda) & = & +\infty\\
& \geq & \overline{\Phi_F}\, '(u^\lambda)(v - u^\lambda).
\end{eqnarray*}
\end{itemize}
Therefore the result follows.
\end{proof}

Let us assume that $u^\lambda \in BV(\mathbb{R}^N)$ is a bounded variation solution of $(P)_\lambda$, i.e., that $u^\lambda$ satisfies (\ref{eqsolution}). Since $0 \in \partial \Psi_\lambda(u^\lambda)$, by the last result it follows that $0 \in \partial \overline{\Psi}_\lambda(u^\lambda)$. Since $\overline{\Phi_\lambda}$ is convex and $\overline{\Phi_F}$ is smooth, it follows that $\overline{\Phi_F}\, '(u^\lambda) \in \partial \overline{\Phi_\lambda}(u^\lambda)$. In what follows, we set 
$\overline{\Phi_\lambda^1}, \overline{\Phi_\lambda^2}:X \to \mathbb{R}\cup \{+\infty\}$ by 
$$
\overline{\Phi_\lambda^1}(v) := 
\left\{
\begin{array}{ll}
\int_{\mathbb{R}^N} |Dv|, & \mbox{if $v \in BV(\mathbb{R}^N)$},\\
+\infty, & \mbox{if $v \in X\backslash BV(\mathbb{R}^N)$},
\end{array}
\right.
$$
and 
$$
\overline{\Phi_\lambda^2}(v) := \int_{\mathbb{R}^N} (1 + \lambda V(x))|v|\,dx.
$$
Note that $\overline{\Phi_\lambda^2} \in C(X,\mathbb{R})$, $\overline{\Phi_\lambda^2} \in C(BV(\mathbb{R}^N),\mathbb{R})$ and 
$$
\overline{\Phi_\lambda}(v)=\overline{\Phi_\lambda^1}(v)+\overline{\Phi_\lambda^2}(v), \quad \forall v \in X.
$$
Since $\overline{\Phi_\lambda^1}$ and $\overline{\Phi_\lambda^2}$ are convex, and $\overline{\Phi_\lambda^2}$ is finite and continuous in every point of $E_\lambda$, it follows from \cite[Theorem 9.5.4]{Buttazzo} that
$$
\overline{\Phi_F}\, '(u^\lambda) \in \partial \overline{\Phi_\lambda}(u^\lambda) = \partial \overline{\Phi_\lambda^1}(u^\lambda) + \partial \overline{\Phi_\lambda^2}(u^\lambda).
$$
By using the same arguments explored in \cite[Theorem 8.15]{Bartle}, it follows that $X' \subset L_{\infty,N}(\mathbb{R}^N)$ where 
$$
L_{\infty,N}(\mathbb{R}^N)=\{g: \mathbb{R}^N \to \mathbb{R} \quad \mbox{measurable}\,:\, ||g||_{\infty,N}<\infty\}
$$ 
where
$$
||g||_{\infty,N}=\sup_{|\phi|_1+|\phi|_{\frac{N}{N-1}}\leq 1}\left|\int_{\mathbb{R}^N}g\phi\,dx\right|.
$$
It is possible to prove that $\|\,\,\|_{\infty,N}$ is a norm in  $L_{\infty,N}(\mathbb{R}^N)$. Moreover, the inclusion $L_{\infty,N}(\mathbb{R}^N) \hookrightarrow L^{N}(B_R(0))$ is continuous for all $R>0$. 

From the above commentaries, there are $z_1^*, z_2^* \in L_{\infty,N}(\mathbb{R}^N)$ such that $z_1^* \in \partial \overline{\Phi_\lambda^1}(u^\lambda)$, $z_2^* \in \partial \overline{\Phi_\lambda}^2(u^\lambda)$ and
$$
\overline{\Phi_F}\, '(u^\lambda) = z_1^* +z_2^* \quad \mbox{ in} \,\,\, L_{\infty,N}(\mathbb{R}^N).
$$
Following the same arguments in \cite[Proposition 4.23, pg. 529]{Kawohl}, we have that there exists $z \in L^\infty(\mathbb{R}^N, \mathbb{R}^N)$ such that $|z|_\infty \leq 1$,
\begin{equation}
-\mbox{div}{z} = z_1^* \quad \mbox{ in} \,\,\, L_{\infty,N}(\mathbb{R}^N)
\label{eulerlagrange1}
\end{equation}
and 
\begin{equation}
 -\int_{\mathbb{R}^N}u^\lambda \mbox{div}z dx = \int_{\mathbb{R}^N}|Du^\lambda|,
 \label{eulerlagrange2}
 \end{equation}
where the divergence in (\ref{eulerlagrange1}) has to be understood in the distributional sense. Moreover, the same result implies that $z_2^*$ is such that 
\begin{equation}
z_2^* |u^\lambda| = (1 + \lambda V(x)) u^\lambda, \quad \mbox{a.e. in $\mathbb{R}^N$.}
\label{eulerlagrange3}
\end{equation}
Therefore, it follows from (\ref{eulerlagrange1}), (\ref{eulerlagrange2}) and (\ref{eulerlagrange3}) that $u^\lambda$ satisfies
\begin{equation}
\left\{
\begin{array}{l}
\exists z \in L^\infty(\mathbb{R}^N,\mathbb{R}^N), \, \, \|z\|_\infty \leq 1,\, \,  \mbox{div}z \in L_{\infty,N}(\mathbb{R}^N), \, \, -\int_{\mathbb{R}^N}u^\lambda \mbox{div}z dx = \int_{\mathbb{R}^N}|Du^\lambda|,\\
\exists z_2^* \in L_{\infty,N}(\mathbb{R}^N),\, \, z_2^*|u^\lambda| = (1 + \lambda V(x))u^\lambda \quad \mbox{a.e. in $\mathbb{R}^N$},\\
-\mbox{div} z + z_2^* = f(u^\lambda), \quad \mbox{a.e. in $\mathbb{R}^N$}.
\end{array}
\right.
\label{eulerlagrangeequation}
\end{equation}

Hence, (\ref{eulerlagrangeequation}) is the precise version of $(P)_\lambda$.

\section{Existence of solution}
\label{sectionexistence}

Let us first verify that the geometrical conditions of the Mountain Pass Theorem are satisfied by $\Psi_\lambda$.
\begin{lemma}
There exist $\alpha, \rho > 0$ (uniform in $\lambda$) such that,
\begin{itemize}
\item [$i)$] $\Psi_\lambda(u) \geq \alpha$ for all $u \in E_\lambda$ such that $\|u\|_\lambda = \rho$, for all $\lambda > 0$;
\item [$ii)$] For each $\lambda > 0$, there exists $e_\lambda \in E_\lambda$ such that $\|e_\lambda\|_\lambda > \rho$ and $\Psi_\lambda(e_\lambda) < 0$.
\end{itemize}
\label{mountainpass}
\end{lemma}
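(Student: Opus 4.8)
The plan is to verify the two mountain-pass geometry conditions for the functional $\Psi_\lambda = \Phi_\lambda - \Phi_F$ by exploiting the continuous embedding $E_\lambda \hookrightarrow L^q(\mathbb{R}^N)$ for $q \in [1,1^*]$, which holds with an embedding constant that is \emph{uniform in} $\lambda$ (since $\|u\|_\lambda \geq \|u\|$ when $\lambda \geq 0$, because $(1+\lambda)V(x) \geq 1 \cdot \chi_{\{V>0\}}$ is not quite right — rather one uses that $\|v\|_\lambda$ dominates $\int|Dv| + |v|_1$ only on a suitable subspace; in any case the embedding constant can be taken independent of $\lambda$ for $\lambda \geq 1$, say). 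This uniformity is exactly what lets us choose $\alpha, \rho$ independent of $\lambda$.

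For part $i)$, I would start from the growth hypotheses $(f_2)$ and $(f_3)$: combining $f(s) = o(1)$ as $s \to 0$ with $|f(s)| \leq c_1 + c_2|s|^{p-1}$ gives, for any $\varepsilon > 0$, a constant $C_\varepsilon > 0$ with $|F(s)| \leq \varepsilon |s| + C_\varepsilon |s|^p$ for all $s$. Hence
\begin{equation*}
\Psi_\lambda(u) = \|u\|_\lambda - \int_{\mathbb{R}^N} F(u)\,dx \geq \|u\|_\lambda - \varepsilon |u|_1 - C_\varepsilon |u|_p^p \geq \|u\|_\lambda - \varepsilon C \|u\|_\lambda - C_\varepsilon C' \|u\|_\lambda^p,
\end{equation*}
using the uniform embeddings $E_\lambda \hookrightarrow L^1$ and $E_\lambda \hookrightarrow L^p$ (note $p \in (1,1^*)$). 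Choosing $\varepsilon$ so that $\varepsilon C \leq 1/2$, we get $\Psi_\lambda(u) \geq \frac{1}{2}\|u\|_\lambda - C_\varepsilon C' \|u\|_\lambda^p$; since $p > 1$, for $\rho > 0$ small enough and $\|u\|_\lambda = \rho$ this is bounded below by $\alpha := \frac{1}{4}\rho > 0$, with $\alpha, \rho$ visibly independent of $\lambda$.

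For part $ii)$, I would fix any $u_0 \in E_\lambda$ with $u_0 \not\equiv 0$ (e.g. a bump function in $C_c^\infty$, which lies in every $E_\lambda$) and examine $\Psi_\lambda(t u_0)$ for $t > 0$. The key is the Ambrosetti--Rabinowitz-type condition $(f_4)$: from $0 < \theta F(s) \leq f(s)s$ one derives the superlinear growth $F(s) \geq a|s|^\theta - b$ for constants $a > 0$, $b \geq 0$ (integrating the differential inequality $F'(s)/F(s) \geq \theta/s$ away from the origin). Since $\Phi_\lambda(tu_0) = t\|u_0\|_\lambda$ is linear in $t$ while $\int F(tu_0)\,dx \geq a t^\theta \int |u_0|^\theta\,dx - b|\mathrm{supp}\,u_0|$ grows like $t^\theta$ with $\theta > 1$, we get $\Psi_\lambda(tu_0) \to -\infty$ as $t \to +\infty$. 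Thus we may pick $t_\lambda$ large with $e_\lambda := t_\lambda u_0$ satisfying $\|e_\lambda\|_\lambda > \rho$ and $\Psi_\lambda(e_\lambda) < 0$; here $e_\lambda$ genuinely depends on $\lambda$ (both through $\|u_0\|_\lambda$ and the value of $t_\lambda$), which is why $ii)$ is stated without uniformity.

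The main obstacle, and the only point requiring real care, is ensuring the embedding constants in part $i)$ are truly $\lambda$-independent: one must check that the inclusion $E_\lambda \hookrightarrow L^q(\mathbb{R}^N)$ factors through $BV(\mathbb{R}^N) \hookrightarrow L^q(\mathbb{R}^N)$ with $\|u\|_{L^q} \leq C_q \|u\| \leq C_q \|u\|_\lambda$ for all $\lambda \geq 1$ — which follows because $\|u\|_\lambda = \int |Du| + \int (1+\lambda)V(x)|u|\,dx$ does \emph{not} obviously dominate $|u|_1$ when $V$ vanishes on a large set. One resolves this exactly as in the companion deep-well literature: split $\mathbb{R}^N = \{V \leq M_0\} \cup \{V > M_0\}$ using $(V_2)$, control $\int_{\{V>M_0\}}|u|$ by $\frac{1}{M_0}\int (1+\lambda)V(x)|u|\,dx$, and control $\int_{\{V\leq M_0\}}|u|$ on the finite-measure set via Hölder and the $BV$-Sobolev embedding $\int|Du| \gtrsim |u|_{1^*}$. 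The resulting constant depends on $M_0$ and $|\{V \leq M_0\}|$ but not on $\lambda$, which closes the argument. Once this is in hand, both items are routine, and I would present them as the two itemized bullets matching the statement.
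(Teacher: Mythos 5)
Your proposal is correct and follows essentially the same route as the paper: for $i)$ the estimate $|F(s)|\leq \eta|s|+A_\eta|s|^p$ from $(f_2)$--$(f_3)$ combined with $\lambda$-uniform embeddings of $E_\lambda$ and a small choice of $\rho$, and for $ii)$ the bound $F(s)\geq d_1|s|^\theta-d_2$ from $(f_4)$ applied to $t$ times a fixed compactly supported function, so that $\Psi_\lambda(tu_0)\leq t\|u_0\|_\lambda-d_1t^\theta|u_0|_\theta^\theta+d_2|\mathrm{supp}(u_0)|\to-\infty$. The only divergence is your extra splitting of $\mathbb{R}^N$ via $(V_2)$ to secure a $\lambda$-independent embedding constant, which is harmless but unnecessary: the zero-order weight is $1+\lambda V(x)\geq 1$ (the $(1+\lambda)V(x)$ in the displayed norm is a misprint, as the paper's own use of $\|u\|_\lambda\geq|u|_1$ and of $\frac{1}{\lambda M_0+1}\|u\|_\lambda$ shows), so $\|u\|_\lambda$ dominates the full $BV(\mathbb{R}^N)$ norm and the uniform embedding is immediate.
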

\begin{proof}
By $(f_2)-(f_3)$, it follows that for each $\eta > 0$, there exists $A_\eta > 0$ such that
\begin{equation}
|F(s)| \leq \eta|s| + A_\eta|s|^p, \quad \forall s \in \mathbb{R}.
\label{fepsilon}
\end{equation}
Note that, by (\ref{fepsilon}) and the embeddings of $E_\lambda$, 
\begin{eqnarray*}
\Psi_\lambda(u)& = &\int_{\mathbb{R}^N} |Du| + \int_{\mathbb{R}^N}(1 + \lambda V(x)) |u| dx - \int_{\mathbb{R}^N} F(u)dx\\
& \geq & \|u\|_\lambda - \eta |u|_1 - A_\eta|u|_p^p\\
& \geq & \|u\|_\lambda - \eta \|u\|_\lambda - c_3\|u\|_\lambda^p\\
& = & \|u\|_\lambda\left(1 - \eta - c_3\|u\|_\lambda^{p-1}\right)\\
& \geq & \alpha,
\end{eqnarray*}
for all $u \in E_\lambda$, such that $\|u\|_\lambda = \rho$, where $0 < \eta < 1$ is fixed, $\displaystyle 0 < \rho < \left(\frac{1-\eta}{c_3}\right)^\frac{1}{p-1}$ and $\displaystyle \alpha = \rho(1 - \eta - c_3\rho^{p-1})$.

In order to verify $ii)$ note that by $(f_4)$, there exist constants $d_1,d_2 > 0$ such that
\begin{equation}
F(s) \geq d_1|s|^\theta - d_2 ,\quad \forall s \in \mathbb{R}.
\label{Festimate}
\end{equation}
If $u$ is a function in $E_\lambda \backslash\{0\}$ with compact support, we derive that 
\begin{equation}
\Psi_\lambda(tu) \leq t\|u\|_\lambda - d_1t^\theta |u|_\theta^\theta + d_2|\mbox{supp}(u)| \to -\infty,
\label{MPTsecondestimate}
\end{equation}
as $t \to +\infty$. Since $\theta > 1$, we can choose $e_\lambda \in E_\lambda$ such that $\Psi(e_\lambda) < 0$.
\end{proof}

By \cite[Theorem 1.3]{FigueiredoPimentaStrauss} it follows that, for all $\lambda > 0$, there exists a sequence $(u^\lambda_n) \subset E_\lambda$ such that
\begin{equation}
\Psi_\lambda(u^\lambda_n) = c_\lambda + o_n(1)
\label{MP1}
\end{equation}
and
\begin{equation}
\|v\|_\lambda - \|u_n^\lambda\|_\lambda \geq \int_{\mathbb{R}^N}f(u_n^\lambda)(v - u^\lambda_n) - \tau_n\|v - u^\lambda_n\|_\lambda, \quad \forall v \in E_\lambda,
\label{MP2}
\end{equation}
where $\tau_n \to 0$ as $n \to +\infty$. The minimax value $c_\lambda$ is given by
$$
c_\lambda = \inf_{\gamma \in \Gamma_\lambda} \max_{t \in [0,1]}\Psi_\lambda (\gamma(t)),
$$
where $\Gamma_\lambda = \{\gamma \in C([0,1], E_\lambda); \, \gamma(0) = 0 \, \, \mbox{and} \, \, \Psi_\lambda(\gamma(1)) < 0\}$.
 Note that by Lemma \ref{mountainpass},
\begin{equation}
c_\lambda \geq \alpha > 0, \quad \forall \lambda > 0.
\label{estimateclambda}
\end{equation}

In our approach will be important the so called {\it Nehari set}, defined as
$$
\begin{array}{ll}
\displaystyle \mathcal{N}_\lambda &\displaystyle  = \{u \in E_\lambda \backslash\{0\}; \, \Psi_\lambda'(u)u = 0\}\\
\displaystyle & \displaystyle = \left\{u \in E_\lambda \backslash\{0\}; \, \|u\|_\lambda = \int_{\mathbb{R}^N}f(u)u \, dx \right\}.
\end{array}
$$
This set is going to give us a better characterization of the minimax level $c_\lambda$. From (\ref{Jlinha}), $\mathcal{N}_\lambda$ is a set that contains all nontrivial bounded variation solutions of $(P)_\lambda$.  Its definition is based on arguments that can be found in \cite{FigueiredoPimentaNehari} which, in turn, are strongly influenced by those ones in \cite{Rabinowitz}. More specifically, they consist in performing a study of the {\it fibering maps} $\gamma_u(t):= \Psi_\lambda(tu)$, by using $(f_1) - (f_5)$ to show that $\mathcal{N}_\lambda$ is radially homeomorphic to the unit sphere in $E_\lambda$. In fact, for each $u \in E_\lambda\backslash\{0\}$, by $(f_2)$ and $(f_3)$, it can be seen that there exists $t_0 > 0$ such that $\gamma_u(t_0) > 0$. On the other hand, $(f_4)$ implies that $\gamma_u(t) \to -\infty$ as $t \to +\infty$. Then there exists $t_u > 0$ such that $\gamma_u(t_u) = \max_{t > 0} \gamma_u(t)$ and then that $\gamma_u'(t_u) = 0$. But $(f_5)$ implies that such $t_u$ is unique. Then for each $u \in E_V\backslash\{0\}$, there exists a unique $t_u > 0$ such that $t_u u \in \mathcal{N}_\lambda $. This establishes such a radial homeomorphism. Still with arguments presented in \cite{Rabinowitz}, one can prove that the minimax level $c_\lambda$ satisfies
\begin{equation}
c_\lambda = \inf_{u \in E_\lambda \backslash\{0\}}\max_{t \geq 0} \Psi_\lambda(tu) = \inf_{u \in \mathcal{N}_\lambda}\Psi_\lambda(u).
\label{c_lambdaNehari}
\end{equation}

\begin{lemma}\label{estimatec_lambda}
There exist constants $\alpha_0,\alpha_1  > 0$ which do not depend on $\lambda > 0$, such that
$$
\alpha_0 \leq c_\lambda \leq \alpha_1, \quad \forall \lambda > 0.
$$
\end{lemma}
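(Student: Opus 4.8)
The goal is a two-sided bound on $c_\lambda$ that is uniform in $\lambda$. The key observation is the monotonicity $\Psi_\lambda \leq \Psi_{\lambda'}$ pointwise whenever $\lambda \leq \lambda'$ (since $V \geq 0$ by $(V_1)$), but this alone only gives monotonicity of $c_\lambda$, not two-sided bounds; so I would instead compare $\Psi_\lambda$ with two $\lambda$-independent functionals, one from above and one from below.

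\textbf{Lower bound.} For the bound $c_\lambda \geq \alpha_0$, I would simply take $\alpha_0 = \alpha$, the Mountain Pass constant from Lemma \ref{mountainpass}, since \eqref{estimateclambda} already gives $c_\lambda \geq \alpha > 0$ for all $\lambda > 0$, with $\alpha$ uniform in $\lambda$. So this half is immediate.

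\textbf{Upper bound.} For $c_\lambda \leq \alpha_1$, the plan is to use the characterization \eqref{c_lambdaNehari}, namely $c_\lambda = \inf_{u \in E_\lambda \setminus \{0\}} \max_{t \geq 0} \Psi_\lambda(tu)$, and exhibit a single test function that works for all $\lambda$. Fix any $w \in C^\infty_c(\mathbb{R}^N) \setminus \{0\}$ (or any $w \in BV(\mathbb{R}^N) \setminus \{0\}$ with compact support) whose support is contained in $\Omega = \mathrm{int}(V^{-1}(\{0\}))$, which is nonempty by $(V_3)$. Then $V(x) = 0$ on $\mathrm{supp}(w)$, so for every $\lambda > 0$ and every $t > 0$ we have
$$
\Psi_\lambda(tw) = t\int_{\mathbb{R}^N}|Dw| + t\int_{\mathbb{R}^N}|w|\,dx - \int_{\mathbb{R}^N}F(tw)\,dx,
$$
which is a quantity independent of $\lambda$. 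Hence $\max_{t \geq 0}\Psi_\lambda(tw) =: \alpha_1$ does not depend on $\lambda$, and using \eqref{Festimate} together with $(f_5)$ (or simply the fibering-map analysis recalled before \eqref{c_lambdaNehari}) this maximum is finite and attained at some $t_w > 0$. Therefore $c_\lambda \leq \max_{t \geq 0}\Psi_\lambda(tw) = \alpha_1$ for all $\lambda > 0$.

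\textbf{Main obstacle.} The only subtlety is ensuring that the chosen test function actually lies in $E_\lambda$ for every $\lambda$ and that $\max_{t\geq 0}\Psi_\lambda(tw)$ is genuinely finite and $\lambda$-free; both are handled by insisting $\mathrm{supp}(w) \subset \Omega$ is compact, so that the weighted term $\int (1+\lambda V)|tw| = t\int|w|$ collapses to something independent of $\lambda$, and $w \in E_\lambda$ trivially. One should also confirm $\Omega$ open nonempty guarantees such a $w$ exists (mollify the indicator of a small ball inside $\Omega$, or use $BV$ directly). No compactness or regularity machinery is needed here; the lemma is essentially a bookkeeping consequence of $(V_1)$, $(V_3)$ and the Nehari characterization \eqref{c_lambdaNehari}.
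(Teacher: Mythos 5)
Your proposal is correct and follows essentially the same route as the paper: the lower bound is exactly the Mountain Pass estimate (\ref{estimateclambda}), and the upper bound is obtained by fixing a nonzero test function supported in $\Omega$ (where $V\equiv 0$ by $(V_3)$), so that $\Psi_\lambda$ along its ray is $\lambda$-independent and $c_\lambda\leq \max_{t>0}\Psi_\lambda(t\varphi)=:\alpha_1$. The only cosmetic difference is that you invoke the characterization (\ref{c_lambdaNehari}) while the paper uses the minimax definition of $c_\lambda$ directly via the path $t\mapsto t\varphi$; both are equally valid here.
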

\begin{proof}
By Lemma \ref{mountainpass} it is enough to take $\alpha_0 \in (0, \alpha)$. In order to obtain $\alpha_1$, let us fix $\varphi \in C^\infty_0(\Omega)$. Then, for all $t > 0$, as in (\ref{MPTsecondestimate}) we get
$$
\Psi_\lambda(t\varphi) \leq t\left(\int_{\mathbb{R}^N}|D\varphi| + \int_{\mathbb{R}^N}|\varphi| dx \right) - d_1t^\theta |\varphi|_\theta^\theta + d_2|\mbox{supp}(\varphi)| \to -\infty,
$$
as $t \to +\infty$.
Hence if $\alpha_1 =: \max_{t > 0}\Psi_\lambda(t\varphi) > 0$, it follows from the definition of $c_\lambda$ that 
$$
c_\lambda \leq \alpha_1, \quad \forall \lambda > 0.
$$
\end{proof}
Now let us study some more refined information about the sequence $(u_n)_{n \in \mathbb{N}}$.

\begin{lemma}
The sequence $(u^\lambda_n)_{n \in \mathbb{N}}$ is bounded in $E_\lambda$.
\label{unlambdabounded}
\end{lemma}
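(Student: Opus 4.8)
The plan is to run the classical Ambrosetti--Rabinowitz boundedness argument, adapted to the non-smooth Palais--Smale-type condition (\ref{MP1})--(\ref{MP2}). The point is that in place of a derivative of $\Psi_\lambda$ we only have the one-sided variational inequality (\ref{MP2}); the idea is therefore to recover the information we need by inserting suitable admissible test functions $v \in E_\lambda$ into it.

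First I would test (\ref{MP2}) with $v = 2u^\lambda_n \in E_\lambda$. Since $\|\cdot\|_\lambda$ is positively $1$-homogeneous, $\|2u^\lambda_n\|_\lambda - \|u^\lambda_n\|_\lambda = \|u^\lambda_n\|_\lambda$, so (\ref{MP2}) reads
$$
\int_{\mathbb{R}^N} f(u^\lambda_n) u^\lambda_n \, dx \leq (1 + \tau_n)\|u^\lambda_n\|_\lambda .
$$
Next I would combine this with $(f_4)$: since $\theta F(s) \leq f(s)s$ for $s \neq 0$, and both sides vanish at $s = 0$, one gets $\int_{\mathbb{R}^N} F(u^\lambda_n)\,dx \leq \frac{1}{\theta}\int_{\mathbb{R}^N} f(u^\lambda_n)u^\lambda_n\,dx \leq \frac{1+\tau_n}{\theta}\|u^\lambda_n\|_\lambda$. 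Feeding this into (\ref{MP1}), rewritten as $\|u^\lambda_n\|_\lambda = \Psi_\lambda(u^\lambda_n) + \int_{\mathbb{R}^N} F(u^\lambda_n)\,dx = c_\lambda + o_n(1) + \int_{\mathbb{R}^N} F(u^\lambda_n)\,dx$, would yield
$$
\Big(1 - \frac{1+\tau_n}{\theta}\Big)\|u^\lambda_n\|_\lambda \leq c_\lambda + o_n(1).
$$

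Finally, since $\theta > 1$ and $\tau_n \to 0$, for $n$ large the coefficient $1 - \frac{1+\tau_n}{\theta}$ is bounded below by $\frac{\theta - 1}{2\theta} > 0$, whence $\|u^\lambda_n\|_\lambda \leq \frac{2\theta}{\theta - 1}\big(c_\lambda + o_n(1)\big)$ and the sequence is bounded in $E_\lambda$. I would also remark, invoking Lemma~\ref{estimatec_lambda}, that this bound may in fact be taken independent of $\lambda$, since that will be convenient later in the concentration analysis.

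I do not expect any serious obstacle. The only points needing care are that $v = 2u^\lambda_n$ is genuinely admissible in (\ref{MP2}) (immediate, as $E_\lambda$ is a vector space) and that the error terms $o_n(1)$ and $\tau_n$ are bookkept so as not to swallow the positive factor $1 - 1/\theta$. The lack of smoothness of $\Psi_\lambda$ is harmless here precisely because we never differentiate it: everything we use is already contained in the inequality (\ref{MP2}) coming from \cite[Theorem 1.3]{FigueiredoPimentaStrauss}.
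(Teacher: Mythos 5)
Your argument is correct and is essentially identical to the paper's proof: testing (\ref{MP2}) with $v = 2u_n^\lambda$ to get $(1+\tau_n)\|u_n^\lambda\|_\lambda \geq \int_{\mathbb{R}^N} f(u_n^\lambda)u_n^\lambda\,dx$, then combining with $(f_4)$ and (\ref{MP1}) to absorb $\int F(u_n^\lambda)$ and conclude $\big(1-\tfrac{1+\tau_n}{\theta}\big)\|u_n^\lambda\|_\lambda \leq c_\lambda + o_n(1)$. Your closing remark that the bound is uniform in $\lambda$ via Lemma~\ref{estimatec_lambda} is exactly the content of Remark~\ref{remarkbounded} in the paper.
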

\begin{proof}
Considering $v = 2u_n^\lambda$ in (\ref{MP2}), we obtain
$$
\|u_n^\lambda\|_\lambda \geq \int_{\mathbb{R}^N}f(u_n^\lambda)u_n^\lambda dx - \tau_n\|u_n^\lambda\|_\lambda,
$$
or equivalently
\begin{equation}
(1 + \tau_n)\|u_n^\lambda\|_\lambda \geq \int_{\mathbb{R}^N}f(u_n^\lambda)u_n^\lambda dx.
\label{lemmaboundedeq1}
\end{equation}
Then, by $(f_4)$ and (\ref{lemmaboundedeq1}), 
\begin{eqnarray*}
c_\lambda + o_n(1) & \geq & \Psi_\lambda(u_n^\lambda)\\
& = & \|u_n^\lambda\|_\lambda + \int_{\mathbb{R}^N}\left(\frac{1}{\theta}f(u_n^\lambda)u_n^\lambda - F(u_n^\lambda)\right)dx - \int_{\mathbb{R}^N}\frac{1}{\theta}f(u_n^\lambda)u_n^\lambda dx\\
& \geq & \|u_n^\lambda\|_\lambda \left(1- \frac{1}{\theta} - \frac{\tau_n}{\theta}\right)\\
& \geq & C\|u_n^\lambda\|_\lambda,
\end{eqnarray*}
for some $C > 0$ that does not depend on $n \in \mathbb{N}$ nor $\lambda > 0$.
\end{proof}
\begin{remark}\label{remarkbounded}
Note that by Lemmas \ref{estimatec_lambda} and \ref{unlambdabounded}, there exists a constant $C > 0$ that does not depend on $\lambda$, such that
$$
\|u_n^\lambda\|_\lambda \leq C, \quad \forall n \in \mathbb{N}.
$$
\end{remark}

By Lemma \ref{unlambdabounded} and the compactness of the embeddings of $BV(K)$ in $L^q(K)$ for $1 \leq q < 1^*$ and $K \subset \mathbb{R}^N$ compact, there exists $u^\lambda \in BV_{loc}(\mathbb{R}^N)$ such that 
\begin{equation}
u_n^\lambda \to u^\lambda \quad \mbox{in $L^q_{loc}(\mathbb{R}^N)$ for $1 \leq q < 1^*$}
\label{convergenceun}
\end{equation}
and
\begin{equation} \label{limite pontual}
u_n^\lambda(x) \to u^\lambda(x) \quad \mbox{a.e. $x \in \mathbb{R}^N$,}
\end{equation}
as $n \to +\infty$. Moreover $u^\lambda$ belongs to $BV(\mathbb{R}^N)$ and then to $E_\lambda$ (by using Fatou Lemma and the boundedness of the sequence $(\|u_n\|_{\lambda})_{n \in \mathbb{N}}$). In fact, if $R > 0$, by the semicontinuity of the norm in $BV(B_R(0))$ w.r.t. the $L^1(B_R(0))$ topology it follows that
\begin{equation}
\|u^\lambda\|_{BV(B_R(0))} \leq \liminf_{n \to +\infty}\|u_n^\lambda\|_{BV(B_R(0))} \leq \liminf_{n \to +\infty}\|u_n^\lambda\|_{BV(\mathbb{R}^N)} \leq C,
\label{ulambdaBV}
\end{equation}
where $C$ does not depend on $n$ nor on $R$.
Since the last inequality holds for every $R > 0$, then $u^\lambda \in BV(\mathbb{R}^N)$.

The next result will help us to get some compactness properties involving the sequence $(u^\lambda_n)$.

\begin{lemma}
Fix $q \in [1,1^*)$. Then, for a given $\epsilon > 0$, there exists $\lambda^* > 0$ and $R > 0$ such that
\begin{equation}
\int_{\mathbb{R}^N\backslash B_R(0)} |u_n^\lambda|^q dx \leq \epsilon,
\label{estimateexterior}
\end{equation}
for all $\lambda \geq \lambda^*$ and $n \in \mathbb{N}$.
\label{lemmaestimateBRc}
\end{lemma}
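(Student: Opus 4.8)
The plan is to establish the estimate first for $q=1$, and then obtain the case $q\in(1,1^*)$ by interpolating between $L^1$ and $L^{1^*}$, using that the $L^{1^*}$-norms of the $u_n^\lambda$ are bounded by a constant independent of both $n$ and $\lambda$. This last fact follows from Remark \ref{remarkbounded}: since $V\geq 0$ one has $\|v\|\leq\|v\|_\lambda$ for every $v\in E_\lambda$, so the continuous embedding $BV(\mathbb{R}^N)\hookrightarrow L^{1^*}(\mathbb{R}^N)$ yields a constant $C_0>0$, independent of $\lambda$ and $n$, with $|u_n^\lambda|_{1^*}\leq C_0$ for all $\lambda>0$ and $n\in\mathbb{N}$.

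For the $L^1$ bound I would fix $R>0$ and decompose $\mathbb{R}^N\backslash B_R(0)$ into $\mathcal{A}_R=\{x\notin B_R(0):V(x)>M_0\}$ and $\mathcal{B}_R=\{x\notin B_R(0):V(x)\leq M_0\}$, with $M_0$ as in $(V_2)$. On $\mathcal{A}_R$ the weight is large, $\lambda V(x)\geq\lambda M_0$, so
\begin{equation*}
\int_{\mathcal{A}_R}|u_n^\lambda|\,dx\leq\frac{1}{\lambda M_0}\int_{\mathbb{R}^N}\lambda V(x)|u_n^\lambda|\,dx\leq\frac{1}{\lambda M_0}\|u_n^\lambda\|_\lambda\leq\frac{C}{\lambda M_0},
\end{equation*}
which can be made arbitrarily small by enlarging $\lambda$. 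On $\mathcal{B}_R$ the weight gives nothing, but $(V_2)$ gives $|\mathcal{B}_R|\leq|\{V\leq M_0\}|<+\infty$, hence $|\mathcal{B}_R|\to 0$ as $R\to+\infty$, and Hölder's inequality with exponents $1^*$ and $N$ gives
\begin{equation*}
\int_{\mathcal{B}_R}|u_n^\lambda|\,dx\leq|\mathcal{B}_R|^{1/N}\,|u_n^\lambda|_{1^*}\leq C_0\,|\mathcal{B}_R|^{1/N}.
\end{equation*}
Thus, given $\epsilon>0$, I would first pick $R$ so large that the second term is $<\epsilon/2$, then $\lambda^*$ so large that the first term is $<\epsilon/2$ for all $\lambda\geq\lambda^*$; summing proves (\ref{estimateexterior}) for $q=1$.

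For $q\in(1,1^*)$, write $A=\mathbb{R}^N\backslash B_R(0)$, choose $a\in(0,1)$ with $\frac{1}{q}=(1-a)+\frac{a}{1^*}$, and use the interpolation inequality
\begin{equation*}
\int_A|u_n^\lambda|^q\,dx\leq\Big(\int_A|u_n^\lambda|\,dx\Big)^{q(1-a)}\Big(\int_A|u_n^\lambda|^{1^*}dx\Big)^{qa/1^*}\leq C_0^{qa}\Big(\int_A|u_n^\lambda|\,dx\Big)^{q(1-a)},
\end{equation*}
so the problem reduces to making the $L^1$-mass on $A$ small, uniformly in $n$ and $\lambda$ — which is exactly what the case $q=1$ provides, after replacing $\epsilon$ by a suitable power of it.

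The only point that needs care is the region where $V$ is small: there the penalization $\lambda V$ is useless, and one must instead use hypothesis $(V_2)$, namely the finiteness of $|\{V\leq M_0\}|$, together with the $\lambda$- and $n$-uniform $L^{1^*}$-bound from Remark \ref{remarkbounded}. Everything else is a routine splitting argument; the essential bookkeeping is to make sure that $C_0$, $C$, $R$ and $\lambda^*$ are chosen independently of $\lambda$ and $n$.
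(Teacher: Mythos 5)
Your proof is correct and follows essentially the same route as the paper: the same splitting of $\mathbb{R}^N\backslash B_R(0)$ according to whether $V$ exceeds $M_0$, the same use of the penalization term on the first piece and of $(V_2)$ plus H\"older with exponents $1^*$ and $N$ on the second, and the same interpolation between $L^1$ and the $\lambda$-uniform $L^{1^*}$ bound from Remark \ref{remarkbounded} to pass from $q=1$ to $q\in(1,1^*)$. No gaps.
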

\begin{proof}
In fact, for a given $R > 0$, let us define the sets
$$
A(R) = \{x \in \mathbb{R}^N; \, |x| > R \, \, \mbox{and} \, \, V(x) \geq M_0\}
$$
and
$$
B(R) = \{x \in \mathbb{R}^N; \, |x| > R \, \, \mbox{and} \, \, V(x) < M_0\},
$$
where $M_0$ is given in $(V_2)$.

Note that, by Remark \ref{remarkbounded} and $(V_2)$, 
\begin{equation}\label{A(R)}
\int_{A(R)}|u_n^\lambda| dx \leq  \frac{1}{\lambda M_0 + 1}\|u_n\|_\lambda \leq \frac{C}{\lambda M_0 + 1} < \frac{\epsilon}{2}, \quad \forall n \in \mathbb{N},
\end{equation}
if $\lambda > \lambda^*$ where $\displaystyle \lambda^* \geq M_0^{-1}\left(\frac{2C}{\epsilon} - 1\right)$.

On the other hand, again by Remark \ref{remarkbounded}, H\"older inequality and the embeddings of $E_\lambda$, 
\begin{equation}\label{B(R)}
\int_{B(R)}|u_n^\lambda| dx  \leq  C|u_n^\lambda|_{1^*}^{1^*}|B(R)|^\frac{1}{N} \leq C|B(R)|^\frac{1}{N} < \frac{\epsilon}{2}
\end{equation}
if $R > 0$ is large enough, since by $(V_2)$, $|B(R)| \to 0$ as $R \to +\infty$.

Then, if $\lambda > \lambda ^*$ and $R > 0$ is large enough, from (\ref{A(R)}) and (\ref{B(R)}) it follows the claim for $q = 1$. Now by Remark \ref{remarkbounded}, the estimate for $q \in (1,1^*)$ follows from interpolation in Lebesgue spaces since $(u_n^\lambda)$ is bounded (uniformly in $\lambda$) in $L^{1^*}(\mathbb{R}^N)$.

\end{proof}

The next result will be used to show that $u^\lambda \neq 0$.

\begin{lemma}\label{c_lambdabelow}
\begin{equation} \label{NOVAEQ}
\liminf_{n \to +\infty}\|u_n^\lambda\|_\lambda \geq \alpha_0\, \quad \forall \lambda > 0.
\end{equation}
\end{lemma}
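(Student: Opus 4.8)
The plan is to exploit the nonnegativity of the primitive $F$. Condition $(f_4)$ gives $\theta F(s) > 0$ for every $s \neq 0$, and $F(0) = 0$ by definition, so $F \geq 0$ on all of $\mathbb{R}$. Hence, for every $u \in E_\lambda$ (for which $\int_{\mathbb{R}^N} F(u)\,dx$ is finite by (\ref{fepsilon}) and the continuous embeddings $E_\lambda \hookrightarrow L^1(\mathbb{R}^N) \cap L^p(\mathbb{R}^N)$), one has
$$
\Psi_\lambda(u) = \|u\|_\lambda - \int_{\mathbb{R}^N} F(u)\,dx \leq \|u\|_\lambda .
$$

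Applying this with $u = u_n^\lambda$ and using (\ref{MP1}), we obtain $\|u_n^\lambda\|_\lambda \geq \Psi_\lambda(u_n^\lambda) = c_\lambda + o_n(1)$ for every $n \in \mathbb{N}$. Taking the limit inferior and invoking Lemma \ref{estimatec_lambda}, which yields $c_\lambda \geq \alpha_0$, we conclude
$$
\liminf_{n \to +\infty} \|u_n^\lambda\|_\lambda \geq c_\lambda \geq \alpha_0 ,
$$
which is precisely (\ref{NOVAEQ}).

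I do not expect a genuine obstacle here: the only points that require a word of care are checking that $(f_4)$ really forces $F \geq 0$ (including the value at the origin) and that $\Psi_\lambda(u_n^\lambda)$ is well defined, both of which are routine. An alternative, more laborious route would be to test (\ref{MP2}) with $v = 0$ to get $\int_{\mathbb{R}^N} f(u_n^\lambda)u_n^\lambda\,dx \geq (1 - \tau_n)\|u_n^\lambda\|_\lambda$, bound the left-hand side above via $(f_2)$ and $(f_3)$ by $\eta \|u_n^\lambda\|_\lambda + C_\eta\|u_n^\lambda\|_\lambda^p$, and then rule out $\|u_n^\lambda\|_\lambda \to 0$ along a subsequence; however, that approach only produces some positive lower bound rather than the sharper constant $c_\lambda$, so the direct computation above is the cleaner path.
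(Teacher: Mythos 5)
Your argument is correct and coincides with the paper's own proof: both use $(f_4)$ to get $F \geq 0$, hence $\Psi_\lambda(u_n^\lambda) \leq \|u_n^\lambda\|_\lambda$, and then combine (\ref{MP1}) with the lower bound $c_\lambda \geq \alpha_0$ from Lemma \ref{estimatec_lambda}. The alternative route you sketch is unnecessary; the direct chain of inequalities is exactly what the paper does.
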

\begin{proof}
Note that from (\ref{MP1}) and Lemma \ref{estimatec_lambda},
$$
\alpha_0 + o_n(1) \leq c_\lambda + o_n(1) = \Psi_\lambda(u_n^\lambda) = \|u_n^\lambda\|_\lambda - \int_{\mathbb{R}^N}F(u_n^\lambda)dx \leq \|u_n^\lambda\|_\lambda.
$$
\end{proof}

\begin{lemma}
For $\lambda^*$ as in Lemma \ref{lemmaestimateBRc}, it follows that $u^\lambda \neq 0$ for all $\lambda \geq \lambda^*$.
\label{lemmaulambdanaonula}
\end{lemma}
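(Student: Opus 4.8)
The plan is to argue by contradiction. Suppose that $u^\lambda = 0$ for some $\lambda \geq \lambda^*$; the goal is then to show that $\|u_n^\lambda\|_\lambda \to 0$, which directly contradicts Lemma \ref{c_lambdabelow}. Conceptually, the point is that condition $(V_2)$ — whose effect is encoded in the uniform tail estimate of Lemma \ref{lemmaestimateBRc} — upgrades the merely local convergence (\ref{convergenceun}) to global strong convergence in Lebesgue spaces, thereby excluding the \emph{vanishing} of $(u_n^\lambda)$.

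Concretely, I would fix $\epsilon > 0$ (to be specified below) and apply Lemma \ref{lemmaestimateBRc} with $q = 1$ and with $q = p$, where $p \in (1,1^*)$ is the exponent from $(f_3)$; taking the larger of the two thresholds and the larger of the two radii, this produces $\lambda^* > 0$ and $R > 0$ such that
$$
\int_{\mathbb{R}^N \setminus B_R(0)} |u_n^\lambda|\,dx \leq \epsilon \quad \mbox{and} \quad \int_{\mathbb{R}^N \setminus B_R(0)} |u_n^\lambda|^p\,dx \leq \epsilon
$$
for all $\lambda \geq \lambda^*$ and all $n \in \mathbb{N}$. Now fix $\lambda \geq \lambda^*$ and assume $u^\lambda = 0$. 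Then (\ref{convergenceun}) gives $u_n^\lambda \to 0$ in $L^1(B_R(0))$ and in $L^p(B_R(0))$, so the corresponding integrals over $B_R(0)$ tend to $0$; combined with the tail bounds this yields $\limsup_n |u_n^\lambda|_1 \leq \epsilon$ and $\limsup_n |u_n^\lambda|_p^p \leq \epsilon$.

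Finally I would insert this information into (\ref{MP2}). Choosing $v = 0$ there gives
$$
\int_{\mathbb{R}^N} f(u_n^\lambda) u_n^\lambda\,dx \geq (1 - \tau_n)\|u_n^\lambda\|_\lambda,
$$
while $(f_3)$ yields $\int_{\mathbb{R}^N} f(u_n^\lambda) u_n^\lambda\,dx \leq c_1 |u_n^\lambda|_1 + c_2 |u_n^\lambda|_p^p$. Since $\tau_n \to 0$, passing to the limit superior gives $\limsup_n \|u_n^\lambda\|_\lambda \leq (c_1 + c_2)\epsilon$. It then suffices to have chosen $\epsilon$ at the outset so that $(c_1 + c_2)\epsilon < \alpha_0$: this contradicts Lemma \ref{c_lambdabelow}, and hence $u^\lambda \neq 0$ for every $\lambda \geq \lambda^*$. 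The argument is otherwise routine; the only point requiring genuine care is the order of the quantifiers, namely that the threshold $\lambda^*$ here is precisely the one attached in Lemma \ref{lemmaestimateBRc} to one fixed, sufficiently small value of $\epsilon$ (depending only on $\alpha_0$, $c_1$, $c_2$), so that it is indeed an $n$-independent threshold and one may then run the contradiction for each such $\lambda$.
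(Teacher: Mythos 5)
Your argument is correct and is essentially the paper's own proof in contrapositive form: both rest on the splitting into $B_R(0)$ and its complement, the uniform tail bound of Lemma \ref{lemmaestimateBRc} with $q=1,p$ together with $(f_3)$, the almost-Nehari inequality extracted from (\ref{MP2}) (the paper takes $v=u_n^\lambda+tu_n^\lambda$, you take $v=0$, which gives exactly the needed direction), and the lower bound $\liminf_n\|u_n^\lambda\|_\lambda\geq\alpha_0$ from Lemma \ref{c_lambdabelow}. The paper concludes directly that $\int_{B_R(0)}f(u^\lambda)u^\lambda\,dx\geq\alpha_0/2>0$, whereas you assume $u^\lambda=0$ and derive $\limsup_n\|u_n^\lambda\|_\lambda\leq(c_1+c_2)\epsilon<\alpha_0$; these are the same estimate read in opposite directions, and your handling of the quantifiers (fixing $\epsilon$ in terms of $\alpha_0,c_1,c_2$ before invoking Lemma \ref{lemmaestimateBRc}) matches the paper's implicit choice.
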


\begin{proof}
Considering in (\ref{MP2}) $v = u_n^\lambda + tu_n^\lambda$ and taking the limit as $t \to 0^\pm$, we find
$$
\Psi_\lambda'(u_n^\lambda)u_n^\lambda = o_n(1),
$$
which implies that
\begin{eqnarray}
\nonumber \|u_n^\lambda\|_\lambda & = & \int_{\mathbb{R}^N}f(u_n^\lambda)u_n^\lambda dx + o_n(1)\\
& = & \int_{B_R(0)}f(u_n^\lambda)u_n^\lambda dx + \int_{\mathbb{R}^N \backslash B_R(0)}f(u_n^\lambda)u_n^\lambda dx + o_n(1).
\label{unlambdaf}
\end{eqnarray}
From $(f_3)$, 
\begin{equation}
\int_{\mathbb{R}^N \backslash B_R(0)}f(u_n^\lambda)u_n^\lambda dx \leq c_1\int_{\mathbb{R}^N \backslash B_R(0)}|u_n^\lambda| dx + c_2\int_{\mathbb{R}^N \backslash B_R(0)}|u_n^\lambda|^p dx.
\label{estimatefBRc}
\end{equation}
By taking  $q=p$ and $\epsilon$ small enough in Lemma \ref{lemmaestimateBRc}, the inequality (\ref{estimatefBRc}) gives that
\begin{equation}
\limsup_{n \to +\infty}\int_{\mathbb{R}^N \backslash B_R(0)}f(u_n^\lambda)u_n^\lambda dx \leq \frac{\alpha_0}{2},
\label{estimatefBRc2}
\end{equation}
where $\alpha_0$ is as in Lemma \ref{c_lambdabelow}.

From the compactness of the embeddings $BV(B_R(0)) \hookrightarrow L^q(B_R(0))$ for $q \in [1,1^*)$ and $(f_3)$, we have that
\begin{equation}
\lim_{n \to +\infty} \int_{B_R(0)}f(u_n^\lambda)u_n^\lambda dx = \int_{B_R(0)}f(u^\lambda)u^\lambda dx.
\label{estimatefBRc3}
\end{equation}
Hence, from (\ref{NOVAEQ}), (\ref{unlambdaf}), (\ref{estimatefBRc2}) and (\ref{estimatefBRc3}), 
\begin{eqnarray*}
\int_{B_R(0)}f(u^\lambda)u^\lambda dx & = & \lim_{n \to +\infty}\int_{B_R(0)}f(u_n^\lambda)u_n^\lambda dx\\
& \geq &  \liminf_{n \to +\infty}\left( \|u_n^\lambda\|_\lambda - \int_{\mathbb{R}^N \backslash B_R(0)}f(u_n^\lambda)u_n^\lambda dx \right)\\
& \geq & \frac{\alpha_0}{2},
\end{eqnarray*}
if $\lambda \geq \lambda^*$. This implies that $u^\lambda \neq 0$.
\end{proof}

\begin{lemma}
$\displaystyle \Phi_\lambda'(u^\lambda)u^\lambda \leq 0$.
\label{lemmaNehari}
\end{lemma}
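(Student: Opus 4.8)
The assertion amounts to the inequality $\|u^\lambda\|_\lambda \le \int_{\mathbb{R}^N} f(u^\lambda)u^\lambda\,dx$ (equivalently $\Psi_\lambda'(u^\lambda)u^\lambda \le 0$, recalling $\Phi_F'(v)v = \int_{\mathbb{R}^N} f(v)v\,dx$ and that $\Phi_\lambda$ is positively $1$-homogeneous), and the plan is to prove it by passing to the limit in the near-criticality relation satisfied by $(u_n^\lambda)$: I will control the left-hand side from below by lower semicontinuity and the right-hand side by strong convergence of the nonlinear term on balls together with uniform smallness of its tail. Throughout I take $\lambda \ge \lambda^*$, so that Lemmas \ref{lemmaestimateBRc} and \ref{lemmaulambdanaonula} are available.

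First, exactly as in the proof of Lemma \ref{lemmaulambdanaonula}, choosing $v = (1 \pm t)u_n^\lambda$ in (\ref{MP2}) and letting $t \to 0^+$ gives $\|u_n^\lambda\|_\lambda = \int_{\mathbb{R}^N} f(u_n^\lambda)u_n^\lambda\,dx + o_n(1)$. For the left-hand side I would use (\ref{convergenceun})--(\ref{limite pontual}): $u_n^\lambda \to u^\lambda$ in $L^1_{loc}(\mathbb{R}^N)$ and a.e., the total variation is lower semicontinuous with respect to $L^1_{loc}$-convergence, and Fatou's lemma applied to the nonnegative integrands $(1 + \lambda V)|u_n^\lambda|$ handles the zeroth-order term; since $\liminf a_n + \liminf b_n \le \liminf(a_n + b_n)$, this yields $\|u^\lambda\|_\lambda \le \liminf_n \|u_n^\lambda\|_\lambda$. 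For the right-hand side I would fix $R > 0$ and split $\int_{\mathbb{R}^N} f(u_n^\lambda)u_n^\lambda\,dx = \int_{B_R(0)} + \int_{\mathbb{R}^N\backslash B_R(0)}$: the local piece converges to $\int_{B_R(0)} f(u^\lambda)u^\lambda\,dx$ by the compact embedding $BV(B_R(0)) \hookrightarrow L^q(B_R(0))$, $q \in [1,1^*)$, combined with $(f_1)$--$(f_3)$ and a Vitali-type argument as in (\ref{estimatefBRc3}), while the exterior piece is bounded via $(f_3)$ by $c_1\int_{\mathbb{R}^N\backslash B_R(0)}|u_n^\lambda|\,dx + c_2\int_{\mathbb{R}^N\backslash B_R(0)}|u_n^\lambda|^p\,dx$, which by Lemma \ref{lemmaestimateBRc} (applied with $q = 1$ and $q = p$) can be made $\le \epsilon$ uniformly in $n$ once $R$ is large. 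Sending first $n \to +\infty$ and then $R \to +\infty$, and using that $f(u^\lambda)u^\lambda \in L^1(\mathbb{R}^N)$ since $u^\lambda \in L^1(\mathbb{R}^N) \cap L^{1^*}(\mathbb{R}^N) \subset L^p(\mathbb{R}^N)$, this gives $\int_{\mathbb{R}^N} f(u_n^\lambda)u_n^\lambda\,dx \to \int_{\mathbb{R}^N} f(u^\lambda)u^\lambda\,dx$. Combining the two estimates, $\|u^\lambda\|_\lambda \le \liminf_n \|u_n^\lambda\|_\lambda = \int_{\mathbb{R}^N} f(u^\lambda)u^\lambda\,dx$, which is the claim.

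I expect the main obstacle to be conceptual rather than computational: because $BV(\mathbb{R}^N)$ is not reflexive, only $L^q_{loc}$-convergence and a uniform $E_\lambda$-bound are at hand, so the first inequality cannot be upgraded to an equality and one cannot appeal to continuity of $v \mapsto \int f(v)v$ --- the (generally strict) inequality in the conclusion is precisely the footprint of this lack of compactness, and it is exactly what the subsequent ground-state argument will consume. The one genuinely technical input is the uniform tail control, which is where $(V_2)$ and the largeness of $\lambda$ enter through Lemma \ref{lemmaestimateBRc}; the local convergence needs the Vitali-type argument because $f(u_n^\lambda)u_n^\lambda$ need not be dominated by a fixed $L^1$ function a priori.
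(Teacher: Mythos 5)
There is a genuine gap, and it sits exactly where you rely on Lemma \ref{lemmaestimateBRc} to make the exterior term small ``once $R$ is large'' for a \emph{fixed} $\lambda$. In that lemma both $\lambda^*$ and $R$ depend on $\epsilon$: the part of the tail over $A(R)=\{|x|>R,\ V\geq M_0\}$ is estimated in (\ref{A(R)}) by $C/(\lambda M_0+1)$, a quantity that is independent of $R$ and does not tend to $0$ as $R\to+\infty$. So for a fixed $\lambda\geq\lambda^*$ you can only guarantee $\limsup_n\int_{\mathbb{R}^N\backslash B_R(0)}f(u_n^\lambda)u_n^\lambda\,dx\leq C(\lambda)$ with $C(\lambda)\sim c_1C/(\lambda M_0+1)+\dots$, not $\leq\epsilon$ for arbitrary $\epsilon>0$; shrinking $\epsilon$ forces $\lambda^*(\epsilon)\to+\infty$, which you cannot do with $\lambda$ frozen. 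Consequently your claimed global convergence $\int_{\mathbb{R}^N}f(u_n^\lambda)u_n^\lambda\,dx\to\int_{\mathbb{R}^N}f(u^\lambda)u^\lambda\,dx$ does not follow from the ingredients you cite, and your chain only yields $\|u^\lambda\|_\lambda\leq\int_{\mathbb{R}^N}f(u^\lambda)u^\lambda\,dx+C(\lambda)$, which is weaker than the lemma. Note also that this global convergence is precisely part of Lemma \ref{lemmatlambda}, which the paper proves \emph{after} and \emph{by means of} Lemma \ref{lemmaNehari} (it needs $t_\lambda\in(0,1]$ to run the Fatou/Nehari-level argument), so invoking it at this stage would be circular. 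The direction you do need, $\liminf_n\int f(u_n^\lambda)u_n^\lambda\leq\int f(u^\lambda)u^\lambda$, is an anti-Fatou inequality and genuinely requires ruling out loss of mass at infinity, which is unavailable here.

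The paper's proof avoids this entirely by localizing \emph{both} sides before letting $n\to+\infty$: it tests with $\varphi_R u_n^\lambda$ (after checking via (\ref{Jlinha}) that this directional derivative makes sense), so the nonlinear term only ever appears as $\int f(u_n^\lambda)\varphi_R u_n^\lambda\,dx$, integrated over the compact set $\overline{B_{2R}(0)}$ where the compact embedding plus $(f_3)$ gives convergence; the left-hand side is handled by lower semicontinuity on $B_R(0)$, and the only price is the extra term $\int u_n^\lambda (Du_n^\lambda)^a\cdot\nabla\varphi_R/|(Du_n^\lambda)^a|\,dx$, bounded by $\frac{C}{R}|u_n^\lambda|_1$, which disappears when $R\to+\infty$ at the very end, as in (\ref{eqv})--(\ref{inequalityv}). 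If you want to keep your structure, replace the splitting $B_R\cup B_R^c$ by this cutoff argument (or prove a tail estimate that is uniform in $n$ with $\epsilon$ arbitrary for fixed $\lambda$, which the present hypotheses do not give). Your first step (getting $\Psi_\lambda'(u_n^\lambda)u_n^\lambda=o_n(1)$ from (\ref{MP2})) and your lower-semicontinuity estimate for $\|\cdot\|_\lambda$ are fine.
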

\begin{proof}
Note that, if $\varphi \in C^\infty_0(\mathbb{R}^N)$, $0 \leq \varphi \leq 1$, $\varphi \equiv 1$ in $B_1(0)$ and $\varphi \equiv 0$ in $B_{2}(0)^c$, for $\varphi_R := \varphi (\cdot/R)$, it follows that for all $u \in BV(\mathbb{R}^N)$, 
\begin{equation}
(D(\varphi_R u))^s \quad \mbox{is absolutely continuous w.r.t.} \quad (Du)^s.
\label{derivadafuncaoteste}
\end{equation} 
In fact, note that
$$
D(\varphi_R u) = \nabla \varphi_R u + \varphi_R Du = \nabla \varphi_R u + \varphi_R Du^a + \varphi_R Du^s , \quad \mbox{in $\mathcal{D}'(\mathbb{R}^N)$}.
$$
Then it follows that 
$$
(D(\varphi_R u))^s = (\varphi_R (Du)^s)^s = \varphi_R (Du)^s.
$$

Taking (\ref{derivadafuncaoteste}) into account, the fact that $\varphi_R u_n^\lambda$ is equal to $0$ a.e. in the set where $u_n^\lambda$ vanishes and also the fact that $\frac{\varphi_R \mu}{|\varphi_R \mu|} = \frac{\mu}{|\mu|}$ a.e. in $B_R(0)$, it follows that it is well defined $\Psi_\lambda'(u_n^\lambda)(\varphi_R u_n^\lambda)$. Moreover, by (\ref{Jlinha}), it follows that
\begin{eqnarray*}
\Psi_\lambda'(u_n^\lambda)(\varphi_R u_n^\lambda) & = & \int_{\mathbb{R}^N}\frac{((Du_n^\lambda)^a)^2 \varphi_R + u_n^\lambda(Du_n^\lambda)^a \cdot \nabla \varphi_R}{|(Du_n^\lambda)^a|}dx \\
& & + \int_{\mathbb{R}^N}\frac{Du_n^\lambda}{|Du_n^\lambda|}\frac{\varphi_R(Du_n^\lambda)^s}{|\varphi_R (Du_n^\lambda)^s|}|\varphi_R(D u_n^\lambda)^s| + \\
& & + \int_{\mathbb{R}^N}(1 + \lambda V(x)) \mbox{sgn}(u_n^\lambda)(\varphi_R u_n^\lambda)dx - \int_{\mathbb{R}^N}f(u_n^\lambda)\varphi_R u_n^\lambda dx\\
& = & \int_{\mathbb{R}^N}\varphi_R |(Du_n^\lambda)^a|dx + \int_{\mathbb{R}^N}\frac{u_n^\lambda(Du_n^\lambda)^a \cdot \nabla\varphi_R}{|(Du_n^\lambda)^a|}dx +\\
& & + \int_{\mathbb{R}^N}\frac{(Du_n^\lambda)^s}{|(Du_n^\lambda)^s|}\frac{\varphi_R(Du_n^\lambda)^s}{|\varphi_R (Du_n^\lambda)^s|}|\varphi_R(D u_n^\lambda)^s| + \int_{\mathbb{R}^N}(1 + \lambda V(x)) |u_n^\lambda|\varphi_R dx -\\
& & - \int_{\mathbb{R}^N}f(u_n^\lambda)\varphi_R u_n^\lambda dx.
\end{eqnarray*}
The last equality together with the lower semicontinuity of the norm in $BV(B_R(0))$ w.r.t. the $L^1(B_R(0))$ convergence and the fact that $\Psi_\lambda'(u_n^\lambda)(\varphi_R u_n^\lambda) = o_n(1)$ (since $(\varphi_R u_n^\lambda)$ is bounded in $BV(\mathbb{R}^N))$, imply that
\begin{equation}
\int_{B_R(0)}|Du^\lambda| + \liminf_{n \to \infty}\int_{\mathbb{R}^N}\frac{u_n^\lambda(Du_n^\lambda)^a \cdot \nabla\varphi_R}{|(Du_n^\lambda)^a|}dx + \int_{\mathbb{R}^N}(1 + \lambda V(x))\varphi_R|u^\lambda|dx \leq \int_{\mathbb{R}^N}f(u^\lambda)u^\lambda \varphi_Rdx.
\label{eqv}
\end{equation}
By doing $R \to +\infty$ in both sides of (\ref{eqv}) we get that
\begin{equation}
\int_{\mathbb{R}^N}|Du^\lambda| + \int_{\mathbb{R}^N}(1 + \lambda V(x))|u^\lambda|dx \leq \int_{\mathbb{R}^N}f(u^\lambda)u^\lambda dx,
\label{inequalityv}
\end{equation}
and the proof is finished. 
\end{proof}

By the last result there exists $t_\lambda \in (0,1]$ such that $t_\lambda u^\lambda \in \mathcal{N}_\lambda$.

Before to get more information about $t_\lambda$, let's just give a piece of information.

\begin{lemma}
\label{lemmaincreasing}
Under $(f_5)$, $f$ is such that $t \mapsto f(t)t - F(t)$ is increasing for $t \in (0,+\infty)$ and decreasing for $t \in (-\infty, 0)$.
\end{lemma}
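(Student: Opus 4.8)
The claim is purely one-dimensional: setting $g(t) := f(t)t - F(t)$, I want to show $g$ is strictly increasing on $(0,+\infty)$ and strictly decreasing on $(-\infty,0)$. The natural approach is to compare the values $g(t_2)$ and $g(t_1)$ for $0 < t_1 < t_2$ directly, since $f$ is only assumed continuous (by $(f_1)$) and increasing (by $(f_5)$), so $g$ need not be differentiable and I should avoid writing $g'(t) = f'(t)t$. I would first record the elementary identity, valid for $0 < t_1 < t_2$,
$$
g(t_2) - g(t_1) = f(t_2)t_2 - f(t_1)t_1 - \int_{t_1}^{t_2} f(t)\,dt,
$$
using $F(t_2) - F(t_1) = \int_{t_1}^{t_2} f(t)\,dt$ from $(f_4)$.

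The key step is to bound the integral from above. Since $f$ is increasing, for $t \in [t_1,t_2]$ we have $f(t) \le f(t_2)$, hence $\int_{t_1}^{t_2} f(t)\,dt \le f(t_2)(t_2 - t_1)$. Plugging this in gives
$$
g(t_2) - g(t_1) \ge f(t_2)t_2 - f(t_1)t_1 - f(t_2)(t_2 - t_1) = f(t_2)t_1 - f(t_1)t_1 = t_1\bigl(f(t_2) - f(t_1)\bigr) \ge 0,
$$
where the last inequality again uses that $f$ is increasing together with $t_1 > 0$. This shows $g$ is non-decreasing on $(0,+\infty)$; strict monotonicity follows if $f$ is strictly increasing, and in any case the statement as used later in the paper only needs the non-strict (or strict under the standing hypotheses) version, so I would phrase the conclusion to match exactly how $(f_5)$ was stated. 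For the negative half-line $t \in (-\infty,0)$, I would run the symmetric argument: for $t_1 < t_2 < 0$, write $g(t_1) - g(t_2) = f(t_1)t_1 - f(t_2)t_2 - \int_{t_2}^{t_1} f(t)\,dt$ and use that on $[t_2,t_1]$ one has $f(t) \le f(t_1) \le 0$ (wait — here I must be careful about signs), so $\int_{t_2}^{t_1} f(t)\,dt \ge f(t_2)(t_1 - t_2)$ since $f(t) \ge f(t_2)$ on that interval; substituting yields $g(t_1) - g(t_2) \ge f(t_1)t_1 - f(t_2)t_2 - f(t_2)(t_1-t_2) = f(t_1)t_1 - f(t_2)t_1 = t_1(f(t_1) - f(t_2)) \ge 0$ because $t_1 < 0$ and $f(t_1) \le f(t_2)$. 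Hence $g(t_1) \ge g(t_2)$, i.e. $g$ is decreasing on $(-\infty,0)$.

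The only mild obstacle is bookkeeping the sign conventions on the negative axis and deciding whether to assert strict monotonicity; since $(f_5)$ says merely "$f$ is increasing," the cleanest route is to prove the inequalities above and, wherever the paper later invokes strictness, note it suffices to have $g(t_2) - g(t_1) \ge t_1(f(t_2)-f(t_1))$, which is strictly positive as soon as $f(t_2) > f(t_1)$. I would also remark that $(f_4)$ guarantees $g(t) = f(t)t - F(t) \ge (\theta - 1)F(t) > 0$ for $t \ne 0$, so $g$ is in fact positive away from the origin, which is the qualitative picture one wants.
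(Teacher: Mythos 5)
Your argument on $(0,+\infty)$ is exactly the paper's proof: write the difference of $f(t)t-F(t)$ at two points, bound the integral $\int f$ by the value of $f$ at the larger argument times the length of the interval, and then use monotonicity of $f$ once more together with the positivity of the smaller argument. The paper treats $t_1>t_2>0$ and simply declares the negative case ``analogous,'' so on the positive half-line there is nothing to add.

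On the negative half-line, however, your bookkeeping has a genuine sign slip. You correctly note (despite the misstated justification ``$f(t)\ge f(t_2)$'') that $\int_{t_2}^{t_1} f(t)\,dt \ge f(t_2)(t_1-t_2)$ for $t_1<t_2<0$; but this is a \emph{lower} bound on the integral, and in the identity $g(t_1)-g(t_2)=f(t_1)t_1-f(t_2)t_2-\int_{t_2}^{t_1}f(t)\,dt$ the integral is subtracted, so inserting a lower bound yields an upper bound on $g(t_1)-g(t_2)$, not the lower bound you claim. Indeed the intermediate inequality $g(t_1)-g(t_2)\ge t_1\bigl(f(t_1)-f(t_2)\bigr)$ is false in general (it would force $\int_{t_1}^{t_2}f(t)\,dt\ge f(t_2)(t_2-t_1)$). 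The correct symmetric step is to use $f(t)\ge f(t_1)$ on the interval, i.e. $\int_{t_1}^{t_2}f(t)\,dt\ge f(t_1)(t_2-t_1)$, which gives
\begin{equation*}
g(t_1)-g(t_2)\;\ge\; f(t_1)t_1-f(t_2)t_2+f(t_1)(t_2-t_1)\;=\;t_2\bigl(f(t_1)-f(t_2)\bigr)\;\ge\;0,
\end{equation*}
since $t_2<0$ and $f(t_1)\le f(t_2)$; the factor must be the endpoint closer to the origin ($t_1$ in the positive case, $t_2$ in the negative case). With this one-line correction your proof coincides with the paper's; your side remarks on strict versus non-strict monotonicity and on $(f_4)$ giving $f(t)t-F(t)\ge(\theta-1)F(t)>0$ are fine but not needed.
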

\begin{proof}
Let $t_1 > t_2 > 0$, then
\begin{eqnarray*}
f(t_{1})t_{1} - F(t_{1}) & = &f(t_{1})t_{1} - F(t_{2}) - \int_{t_{2}}^{t_{1}}f(s)ds\\
& > &f(t_{1})t_{1} - F(t_{2}) - f(t_{1})\int_{t_{2}}^{t_{1}}ds\\
& > &f(t_{2})t_{2} - F(t_{2}).
\end{eqnarray*}
The case in which $t_1 < t_2 < 0$ is analogous.
\end{proof}

\begin{lemma}\label{lemmatlambda}
Let $\lambda^*$ be as in Lemma \ref{lemmaestimateBRc}. If $\lambda \geq \lambda^*$, then $t_\lambda = 1$,
$$
\lim_{n \to +\infty}\int_{\mathbb{R}^N}f(u_n^\lambda)u_n^\lambda dx = \int_{\mathbb{R}^N}f(u^\lambda)u^\lambda dx,
$$ 
$$
\lim_{n \to +\infty}\int_{\mathbb{R}^N}F(u_n^\lambda) dx = \int_{\mathbb{R}^N}F(u^\lambda) dx
$$
and
$$
\lim_{n \to +\infty}\|u_n^\lambda\|_\lambda = \|u^\lambda\|_\lambda.
$$
\end{lemma}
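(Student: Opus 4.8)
The plan is to first pin down $t_\lambda = 1$ by combining the minimax characterization of $c_\lambda$ via the Nehari set with the lower semicontinuity of the energy, and then to deduce the three limit statements from the resulting equality $\lim_n \|u_n^\lambda\|_\lambda = \|u^\lambda\|_\lambda$ together with Fatou-type arguments. First I would recall that by Lemma \ref{lemmaNehari} there is $t_\lambda \in (0,1]$ with $t_\lambda u^\lambda \in \mathcal{N}_\lambda$, so by (\ref{c_lambdaNehari}) we have $c_\lambda \leq \Psi_\lambda(t_\lambda u^\lambda)$. The goal is the reverse inequality $\Psi_\lambda(t_\lambda u^\lambda) \leq c_\lambda$, which forces $t_\lambda u^\lambda$ to be a minimizer on $\mathcal{N}_\lambda$; then the uniqueness of the fibering-map maximum (from $(f_5)$, used in the discussion preceding (\ref{c_lambdaNehari})) pushes toward $t_\lambda = 1$ once we also know $u^\lambda$ itself is (asymptotically) on the Nehari set. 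The cleanest route: write $\Psi_\lambda(t_\lambda u^\lambda)$ using $t_\lambda u^\lambda \in \mathcal{N}_\lambda$ as $\int_{\mathbb{R}^N}\big(\tfrac{1}{\theta}f(t_\lambda u^\lambda)(t_\lambda u^\lambda) - F(t_\lambda u^\lambda)\big)dx + \big(1-\tfrac1\theta\big)\|t_\lambda u^\lambda\|_\lambda$ — actually more convenient is to use $\Psi_\lambda(w) = \Psi_\lambda(w) - \tfrac1\theta \Psi_\lambda'(w)w$ for $w \in \mathcal{N}_\lambda$, giving $\Psi_\lambda(t_\lambda u^\lambda) = \int_{\mathbb{R}^N}\big(\tfrac1\theta f(t_\lambda u^\lambda)(t_\lambda u^\lambda) - F(t_\lambda u^\lambda)\big)dx$, and the integrand is monotone in $t_\lambda \in (0,1]$ by Lemma \ref{lemmaincreasing}.

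Next I would run the analogous computation for $u_n^\lambda$: since $\Psi_\lambda'(u_n^\lambda)u_n^\lambda = o_n(1)$ (established inside the proof of Lemma \ref{lemmaulambdanaonula}), we get $c_\lambda + o_n(1) = \Psi_\lambda(u_n^\lambda) = \int_{\mathbb{R}^N}\big(\tfrac1\theta f(u_n^\lambda)u_n^\lambda - F(u_n^\lambda)\big)dx + o_n(1)$. Now split each integral into $B_R(0)$ and its complement. On $B_R(0)$, the compact embedding $BV(B_R(0)) \hookrightarrow L^q(B_R(0))$ and the pointwise convergence (\ref{limite pontual}) let me pass to the limit: $\int_{B_R(0)}\big(\tfrac1\theta f(u_n^\lambda)u_n^\lambda - F(u_n^\lambda)\big)dx \to \int_{B_R(0)}\big(\tfrac1\theta f(u^\lambda)u^\lambda - F(u^\lambda)\big)dx$, using $(f_3)$ for equi-integrability. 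On $\mathbb{R}^N \backslash B_R(0)$, Lemma \ref{lemmaestimateBRc} with $q = p$ (and $\epsilon$ small, $\lambda \geq \lambda^*$, $R$ large) makes the tail uniformly small. Letting first $n \to \infty$ and then $R \to \infty$, I obtain $c_\lambda \geq \int_{\mathbb{R}^N}\big(\tfrac1\theta f(u^\lambda)u^\lambda - F(u^\lambda)\big)dx \geq \int_{\mathbb{R}^N}\big(\tfrac1\theta f(t_\lambda u^\lambda)(t_\lambda u^\lambda) - F(t_\lambda u^\lambda)\big)dx = \Psi_\lambda(t_\lambda u^\lambda) \geq c_\lambda$, where the middle inequality is Lemma \ref{lemmaincreasing} (nonnegativity of the integrand plus $t_\lambda \le 1$). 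Hence equality holds throughout; since $u^\lambda \ne 0$ (Lemma \ref{lemmaulambdanaonula}) and the integrand of Lemma \ref{lemmaincreasing} is strictly increasing, $t_\lambda = 1$, and $u^\lambda \in \mathcal{N}_\lambda$ with $\Psi_\lambda(u^\lambda) = c_\lambda$.

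Finally, the limit statements. From $t_\lambda = 1$ we have $\|u^\lambda\|_\lambda = \int_{\mathbb{R}^N} f(u^\lambda)u^\lambda\,dx$, and from the chain of equalities just obtained, $\lim_n \int_{\mathbb{R}^N}\big(\tfrac1\theta f(u_n^\lambda)u_n^\lambda - F(u_n^\lambda)\big)dx = \int_{\mathbb{R}^N}\big(\tfrac1\theta f(u^\lambda)u^\lambda - F(u^\lambda)\big)dx$. To upgrade this to the separate convergences of $\int f(u_n^\lambda)u_n^\lambda$ and $\int F(u_n^\lambda)$, note that by $(f_4)$ the two integrands $\tfrac1\theta f(s)s - F(s) \ge (1-\tfrac1\theta)F(s) \ge 0$ and $f(s)s \ge \theta F(s) \ge 0$ are nonnegative; Fatou's lemma applied to each (using (\ref{limite pontual})) gives $\liminf$ bounds below, and combining with the convergence of the difference-type combination, plus the uniform tail smallness from Lemma \ref{lemmaestimateBRc} and $B_R(0)$-convergence as above, forces $\int_{\mathbb{R}^N}f(u_n^\lambda)u_n^\lambda\,dx \to \int_{\mathbb{R}^N}f(u^\lambda)u^\lambda\,dx$ and $\int_{\mathbb{R}^N}F(u_n^\lambda)\,dx \to \int_{\mathbb{R}^N}F(u^\lambda)\,dx$. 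Then $\lim_n\|u_n^\lambda\|_\lambda = \lim_n\big(\Psi_\lambda(u_n^\lambda) + \int_{\mathbb{R}^N}F(u_n^\lambda)dx\big) = c_\lambda + \int_{\mathbb{R}^N}F(u^\lambda)dx = \Psi_\lambda(u^\lambda) + \int_{\mathbb{R}^N}F(u^\lambda)dx = \|u^\lambda\|_\lambda$. The main obstacle is controlling the integrals over $\mathbb{R}^N \backslash B_R(0)$ uniformly in $n$ — this is exactly where Lemma \ref{lemmaestimateBRc} (hence the restriction $\lambda \geq \lambda^*$) is essential, since without it there could be mass escaping to infinity and the limits would fail; a secondary delicate point is ensuring the strict monotonicity in Lemma \ref{lemmaincreasing} genuinely yields $t_\lambda = 1$ rather than merely $t_\lambda \le 1$, which requires $u^\lambda \ne 0$ on a set of positive measure.
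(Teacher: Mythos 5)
Your strategy for $t_\lambda=1$ is the paper's (Fatou applied to a nonnegative combination of $f$ and $F$, then the monotonicity of Lemma \ref{lemmaincreasing}, then the Nehari characterization (\ref{c_lambdaNehari})), but the chain you write does not close because of the factor $\tfrac1\theta$ you insert. The identity you call ``more convenient'' is false: since $\Phi_\lambda'(w)w=\Phi_\lambda(w)$ (see (\ref{derivadaJ})), for $w\in\mathcal{N}_\lambda$ one has
$$
\Psi_\lambda(w)=\Psi_\lambda(w)-\tfrac1\theta\Psi_\lambda'(w)w=\Bigl(1-\tfrac1\theta\Bigr)\|w\|_\lambda+\int_{\mathbb{R}^N}\Bigl(\tfrac1\theta f(w)w-F(w)\Bigr)dx,
$$
so $\Psi_\lambda(t_\lambda u^\lambda)$ is strictly larger than $\int\bigl(\tfrac1\theta f(t_\lambda u^\lambda)t_\lambda u^\lambda-F(t_\lambda u^\lambda)\bigr)dx$ (you wrote the correct formula first and then dropped the norm term); similarly $c_\lambda+o_n(1)=\Psi_\lambda(u_n^\lambda)$ equals $(1-\tfrac1\theta)\|u_n^\lambda\|_\lambda+\int\bigl(\tfrac1\theta f(u_n^\lambda)u_n^\lambda-F(u_n^\lambda)\bigr)dx+o_n(1)$, not the integral alone. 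Moreover, the middle inequality of your chain needs $t\mapsto\tfrac1\theta f(t)t-F(t)$ to be nondecreasing; Lemma \ref{lemmaincreasing} asserts this only for $f(t)t-F(t)$, and ``nonnegativity plus $t_\lambda\le1$'' does not yield it (it would essentially require $f(t)/t^{\theta-1}$ nondecreasing, which is not among $(f_1)$--$(f_5)$). If you reinstate the missing norm terms, your chain only gives $c_\lambda\ge\Psi_\lambda(t_\lambda u^\lambda)-(1-\tfrac1\theta)\|t_\lambda u^\lambda\|_\lambda$, which together with $\Psi_\lambda(t_\lambda u^\lambda)\ge c_\lambda$ produces no contradiction, so $t_\lambda=1$ is not proved as written. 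The repair is simply to drop the $\tfrac1\theta$: for every $w$, $\Psi_\lambda(w)-\Psi_\lambda'(w)w=\int_{\mathbb{R}^N}(f(w)w-F(w))dx$, hence $c_\lambda+o_n(1)=\int(f(u_n^\lambda)u_n^\lambda-F(u_n^\lambda))dx$; Fatou (the integrand is nonnegative by $(f_4)$) and Lemma \ref{lemmaincreasing} applied verbatim give $c_\lambda\ge\int(f(t_\lambda u^\lambda)t_\lambda u^\lambda-F(t_\lambda u^\lambda))dx=\Psi_\lambda(t_\lambda u^\lambda)\ge c_\lambda$, forcing $t_\lambda=1$ --- this is exactly the paper's computation, and no splitting into $B_R(0)$ and its complement is needed for this inequality.

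For the three limits your plan is salvageable but loosely argued, and it differs from the paper: the paper deduces from the chain that the nonnegative functions $f(u_n^\lambda)u_n^\lambda-F(u_n^\lambda)$ converge a.e. with convergent integrals, hence in $L^1(\mathbb{R}^N)$, and then dominates $f(u_n^\lambda)u_n^\lambda$ and $F(u_n^\lambda)$ by this $L^1$-convergent sequence using $(f_4)$ (generalized dominated convergence), after which $\|u_n^\lambda\|_\lambda\to\|u^\lambda\|_\lambda$ follows from $\|u_n^\lambda\|_\lambda=\int f(u_n^\lambda)u_n^\lambda dx+o_n(1)$ and $\|u^\lambda\|_\lambda=\int f(u^\lambda)u^\lambda dx$. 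Your alternative (convergence on $B_R(0)$ via compact embeddings and $(f_3)$, plus the uniform tail bound of Lemma \ref{lemmaestimateBRc} for $\lambda\ge\lambda^*$) does work, but note that ``Fatou on each piece plus convergence of the difference'' by itself is not enough: Fatou only controls the liminf from below, and the matching upper bounds must come either from the tail-plus-local argument or from the domination by $f(u_n^\lambda)u_n^\lambda-F(u_n^\lambda)$; make that step explicit.
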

\begin{proof}
Note that
\begin{equation}
c_\lambda + o_n(1) = \Psi_\lambda(u^{\lambda}_n) +o_n(1) = \Psi_\lambda(u^{\lambda}_n) - \Psi'_\lambda(u^{\lambda}_n)u^{\lambda}_n = \int_{\mathbb{R}^N}\left(f(u^{\lambda}_n)u^{\lambda}_n - F(u^{\lambda}_n)\right) dx.
\label{convergenciaf}
\end{equation}
Applying Fatou Lemma in the last inequality together with Lemma \ref{lemmaincreasing}, we derive that
\begin{eqnarray*}
c_\lambda & \geq &  \int_{\mathbb{R}^N}\left(f(u^\lambda)u^\lambda - F(u^\lambda)\right) dx\\
& \geq & \int_{\mathbb{R}^N}\left(f(t_\lambda u^\lambda)t_\lambda u^\lambda - F(t_\lambda u^\lambda)\right) dx\\
& = & \Psi_\lambda(t_\lambda u^\lambda) - \Psi'_\lambda(t_\lambda u^\lambda)t_\lambda u^\lambda\\
& = & \Psi_\lambda(t_\lambda u^\lambda)\\
& \geq & c_\lambda.
\end{eqnarray*}
Hence, $t_\lambda=1$, $\Phi_{\lambda}(u^\lambda)=c_\lambda$, and by (\ref{convergenciaf}),
\begin{equation} \label{EQUAT0}
f(u_n^\lambda)u_n^\lambda - F(u_n^\lambda) \to f(u^\lambda)u^\lambda - F(u^\lambda) \quad \mbox{in} \quad L^{1}(\mathbb{R}^N).
\end{equation}
This limit together with $(f_4)$ and (\ref{limite pontual}) yield
\begin{equation} \label{EQUAT1}
f(u_n^\lambda)u_n^\lambda \to f(u^\lambda)u^\lambda \quad \mbox{in $L^1(\mathbb{R}^N)$}
\end{equation}
\begin{equation} \label{EQUAT2}
F(u_n^\lambda) \to F(u^\lambda) \quad \mbox{in $L^1(\mathbb{R}^N)$}
\end{equation}
and
\begin{equation} \label{EQUAT3}
\|u_n^\lambda\|_\lambda \to \|u^\lambda\|_\lambda.
\end{equation}

Here, we have used the fact that $(f_4)$ ensures that 
$$
0\leq (1- 1/ \theta)f(u_n^\lambda)v_n \leq f(u_n^\lambda)v_n - F(u_n^\lambda)
$$
and
$$
0 \leq (\theta- 1)F(u_n^\lambda) \leq f(u_n^\lambda)u_n^\lambda - F(u_n^\lambda).
$$
Then, by (\ref{EQUAT0}), we can apply the Lebesgue Dominated Convergence Theorem to get (\ref{EQUAT1}) and (\ref{EQUAT2}). Recalling that $\|u^\lambda\|_{\lambda}=\int_{\mathbb{R}^N}f(u^\lambda)u^\lambda$ and $\|u_n^\lambda\|_{\lambda}=\int_{\mathbb{R}^N}f(u_n^\lambda)u_n^\lambda + o_n(1)$, the limit (\ref{EQUAT1}) implies in (\ref{EQUAT3}).
\end{proof}

As a consequence of the last result, we see that $u^\lambda$ is a bounded variation solution of $(P)_\lambda$. In fact, from (\ref{MP2}), Lemma \ref{lemmatlambda} and the lower semicontinuity of $\|\cdot \|_\lambda$ w.r.t. the $L^1(\mathbb{R}^N)$ convergence, it follows that
\begin{equation}
\|v\|_\lambda - \|u^\lambda\|_\lambda \geq \int_{\mathbb{R}^N}f(u^\lambda)(v - u^\lambda)dx, \quad \forall v \in E_\lambda,
\label{solutionepsilon}
\end{equation}
and then $u^\lambda$ is in fact a nontrivial solution of $(P)_\lambda$. Moreover, note that from (\ref{MP1})
\begin{eqnarray*}
c_\lambda & \leq & \Psi_\lambda (u^\lambda)\\
& = & \Psi_\lambda (u^\lambda) - \Psi_\lambda'(u^\lambda)u^\lambda\\
& = & \int_{\mathbb{R}^N}\left(f(u^\lambda)u^\lambda - F(u^\lambda)\right)dx\\
& \leq & \liminf_{n \to \infty}\int_{\mathbb{R}^N}\left(f(u_n^\lambda)u_n^\lambda - F(u_n^\lambda)\right)dx\\
& = & \Psi_\lambda(u_n^\lambda) + o_n(1)\\
& = & c_\lambda,
\end{eqnarray*}
which implies that 
\begin{equation}
\Psi_\lambda(u^\lambda) = c_\lambda.
\label{ulambdaclambda}
\end{equation}

Since $\mathcal{N}_\lambda$ contains all nontrivial bounded variation solutions of $(P)_\lambda$, from (\ref{ulambdaclambda}), in view of (\ref{c_lambdaNehari}) it follows that $u^\lambda$ is a ground-state solution of $(P)_\lambda$.

\section{The concentration arguments}

\subsection{The behavior of the $(PS)_{c,\infty}$ sequences}

First of all let us consider the following definition.

\begin{definition}
A sequence $(u_n) \subset  BV(\mathbb{R}^N)$ is called a $ (PS)_{c,\infty}$-sequence for the family $\left(\Psi_\lambda \right)_{\lambda \geq 1}$, if there is a sequence $\lambda_n \to \infty $ such that $u_n \in E_{\lambda_n}$ for $n \in \mathbb{N}$, 
$$
   \Psi_{\lambda_n }(u_n) \to c,
$$
as $n \to +\infty$ and moreover
\begin{equation}
\|v\|_{\lambda_n} - \|u_n\|_{\lambda_n} \geq \int_{\mathbb{R}^N}f(u_n)(v - u_n) - \tau_n\|v - u_n\|_{\lambda_n}, \quad \forall v \in E_{\lambda_n}
\label{PScinfinity}
\end{equation}
where $\tau_n \to 0$ as $n \to +\infty$.
\end{definition}

Before to proceed with other results, let us point out some facts about the limit problem (\ref{limitproblem}). Note that (\ref{limitproblem}) is the formal version of the Euler-Lagrange equation of the functional $\Phi_\Omega : BV(\Omega) \to \mathbb{R}$ given by
$$
\Phi_\Omega(u) = \int_\Omega |Du| + \int_\Omega |u|dx + \int_{\partial\Omega} |u|d\mathcal{H}_{N-1} - \int_\Omega F(u) dx.
$$

By the assumptions $(f_1)-(f_5)$, the {\it Nehari set} associated to $\Phi_\Omega$ is well defined by
$$
\mathcal{N}_\Omega = \{u \in BV(\Omega)\backslash\{0\}; \, \Phi_\Omega'(u)u = 0\},
$$
and let us define 
$$
c_\Omega = \inf_{\mathcal{N}_\Omega}\Phi_\Omega.
$$
Note that $c_\Omega$ is well defined since from $(f_4)$, $\Phi_\Omega(u) \geq 0$ for all $u \in \mathcal{N}_\Omega$. In fact, since $I_\Omega$ satisfies the geometric conditions of the Mountain Pass Theorem, well known arguments imply that 
$$
c_\Omega = \inf_{\gamma \in \Gamma_\Omega} \max_{t \in [0,1]}I_\Omega(\gamma(t)),
$$
where $\Gamma_\Omega = \{\gamma \in C([0,1],BV(\Omega)); \,\, \gamma(0) = 0 \, \, \mbox{and} \, \, I_\Omega(\gamma(1)) < 0\}$.

\begin{proposition} \label{(PS)inftycondition}
Let $(u_n) \subset BV(\mathbb{R}^N)$ be a $ (PS)_{d,\infty}$-sequence for $ \left(\Psi_\lambda \right)_{\lambda \geq 1}$, where $d \in \mathbb{R}$. Then either $d = 0$ or $d \geq c_\Omega$. In the last case, there exists $u_\Omega \in BV(\mathbb{R}^N)$ such that, up to a subsequence, $u_n \to u_\Omega$ in $L^q_{loc}(\mathbb{R}^N)$, for $1 \leq q < 1^*$, $u_\Omega \equiv 0$ a.e. in $\mathbb{R}^N\backslash\Omega$ and $u_\Omega$ is a bounded variation solution of (\ref{limitproblem}). Moreover, if $d = c_\Omega$, then 
$$
\|u_n\|_{\lambda_n} - \|u_\Omega\|_\Omega \to 0, \quad \mbox{as $n \to +\infty$.}
$$
\end{proposition}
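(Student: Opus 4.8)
The plan is to argue much as in the $\lambda$-fixed existence theory of Section \ref{sectionexistence}, but now tracking the interplay between the diverging penalization $\lambda_n V$ and the region $\Omega$. First I would establish that a $(PS)_{d,\infty}$-sequence $(u_n)$ is bounded in $BV(\mathbb{R}^N)$ with $\sup_n \|u_n\|_{\lambda_n} < \infty$: the same Ablowitz–Rabinowitz computation used in Lemma \ref{unlambdabounded} (testing \eqref{PScinfinity} with $v = 2u_n$ to get $\Psi'_{\lambda_n}(u_n)u_n = o_n(1)\|u_n\|_{\lambda_n}$, then combining with $\Psi_{\lambda_n}(u_n)\to d$ and $(f_4)$) yields $C\|u_n\|_{\lambda_n}\le d + o_n(1)$, uniformly in $n$. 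In particular $\int_{\mathbb{R}^N}\lambda_n V(x)|u_n|\,dx \le C$, so passing to a subsequence there is $u_\Omega \in BV(\mathbb{R}^N)$ with $u_n \to u_\Omega$ in $L^q_{loc}$ for $1\le q<1^*$ and a.e., by the local compact embedding of $BV$.

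Next I would show $u_\Omega \equiv 0$ a.e.\ outside $\Omega$. For each $\delta>0$ the set $\{V \ge \delta\}$ has the property that $\int_{\{V\ge\delta\}}|u_n| \le (\lambda_n\delta)^{-1}\int \lambda_n V|u_n| \le C/(\lambda_n\delta) \to 0$, so $u_\Omega = 0$ a.e.\ on $\{V\ge\delta\}$; letting $\delta\downarrow 0$ gives $u_\Omega = 0$ a.e.\ on $\{V>0\} = \mathbb{R}^N\setminus V^{-1}(\{0\})$, and since $|\partial \Omega| = 0$ is not automatic one argues that $u_\Omega$ vanishes a.e.\ on $\mathbb{R}^N\setminus\overline{\Omega}$, and then that as an element of $BV(\mathbb{R}^N)$ supported (up to null sets) in $\overline\Omega$ it restricts to a function in $BV(\Omega)$ whose trace contributes the boundary term $\int_{\partial\Omega}|u_\Omega|\,d\mathcal H_{N-1}$; concretely, $\|u_\Omega\|_\Omega \le \liminf_n \|u_n\|_{BV(\mathbb{R}^N)}$ by lower semicontinuity of total variation together with the fact that mass escaping to $\partial\Omega$ from outside is caught by the Hausdorff term. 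Then, exactly as in Lemma \ref{lemmaestimateBRc} (the diverging potential plays here the role that large $\lambda$ played there, with $A(R),B(R)$ replaced by $\{V\ge M_0\}$ and $\{V<M_0\}$), I get the tightness estimate $\int_{\mathbb{R}^N\setminus B_R(0)}|u_n|^q\,dx \le \epsilon$ for $R$ large, uniformly in $n$, which upgrades the local convergence of the nonlinear terms to global: $\int f(u_n)u_n \to \int_\Omega f(u_\Omega)u_\Omega$ and $\int F(u_n)\to \int_\Omega F(u_\Omega)$ as in Lemma \ref{lemmatlambda}, using $(f_3)$, Hölder, and the uniform $L^{1^*}$ bound.

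With these convergences in hand, the dichotomy follows: from $d = \lim \Psi_{\lambda_n}(u_n) = \lim\big(\Psi_{\lambda_n}(u_n) - \Psi'_{\lambda_n}(u_n)u_n\big) = \lim \int (f(u_n)u_n - F(u_n))$ and Fatou plus Lemma \ref{lemmaincreasing}, either $u_\Omega = 0$ — in which case the tightness estimate forces $\int f(u_n)u_n \to 0$, hence $\|u_n\|_{\lambda_n}\to 0$ and $d = 0$ — or $u_\Omega \ne 0$, and then testing the limiting inequality (obtained by passing to the limit in \eqref{PScinfinity} with fixed $v\in C^\infty_0(\Omega)\subset E_{\lambda_n}$, using $\lambda_n V \equiv 0$ on $\mathrm{supp}\,v$ and lower semicontinuity) shows $\Phi_\Omega'(u_\Omega)u_\Omega \le 0$, so some $t_\Omega\in(0,1]$ has $t_\Omega u_\Omega\in\mathcal N_\Omega$ and $d \ge \Phi_\Omega(t_\Omega u_\Omega)\ge c_\Omega$. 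The proof that $u_\Omega$ actually solves \eqref{limitproblem} in the sense of \eqref{BVsolutionlimitproblem} comes from taking limits in \eqref{PScinfinity} against arbitrary $v\in BV(\Omega)$ extended by zero, again with the boundary term appearing via lower semicontinuity of the relaxed total variation on $\mathbb{R}^N$. Finally, if $d = c_\Omega$ the squeeze $c_\Omega \le \Phi_\Omega(t_\Omega u_\Omega) \le \Phi_\Omega(u_\Omega) \le \liminf \int(f(u_n)u_n - F(u_n)) = d = c_\Omega$ forces $t_\Omega = 1$ and $\Phi_\Omega(u_\Omega) = c_\Omega$, and then $\|u_n\|_{\lambda_n} = \int f(u_n)u_n + o_n(1) \to \int_\Omega f(u_\Omega)u_\Omega = \|u_\Omega\|_\Omega$.

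The main obstacle I expect is the second paragraph: justifying that the $BV(\mathbb{R}^N)$-weak-$*$ limit $u_\Omega$, which a priori only vanishes a.e.\ on $\{V>0\}$, legitimately becomes an element of $BV(\Omega)$ whose \emph{full} $\|\cdot\|_\Omega$-norm (including the Hausdorff boundary term) is controlled by $\liminf_n\|u_n\|_{BV(\mathbb{R}^N)}$ — i.e.\ that no variation mass is lost at $\partial\Omega$ and that the trace term is recovered in the lower-semicontinuity inequality. This requires care with the relaxation of the total variation across $\partial\Omega$ (the same phenomenon that underlies the definition \eqref{BVsolutionlimitproblem}) and is the place where the $1$-Laplacian framework genuinely differs from the reflexive Sobolev setting of \cite{BW2}.
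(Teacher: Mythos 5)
Your proposal follows the same overall architecture as the paper's proof: boundedness of $\|u_n\|_{\lambda_n}$ via the computation of Lemma \ref{unlambdabounded}, local compactness in $L^q_{loc}$, vanishing of $u_\Omega$ on the sets $\{V\geq 1/m\}$, the key claim $\Phi_\Omega'(u_\Omega)u_\Omega\leq 0$ followed by projection onto $\mathcal{N}_\Omega$ and the Fatou--plus--Lemma \ref{lemmaincreasing} chain giving $d\geq c_\Omega$, and a squeeze when $d=c_\Omega$. The genuine difference is how you get compactness of the nonlinear terms: you extend the tail estimate of Lemma \ref{lemmaestimateBRc} to $\lambda_n\to+\infty$, deduce global $L^q(\mathbb{R}^N)$ convergence of $(u_n)$ and hence $\int f(u_n)u_n\to\int_\Omega f(u_\Omega)u_\Omega$, $\int F(u_n)\to\int_\Omega F(u_\Omega)$ for \emph{every} $d$. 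The paper never uses tightness in this proposition: it localizes with a cutoff $\varphi_\delta$ supported in a $2\delta$-neighborhood of $\Omega$, proves the Claim using only $L^q(\Omega_{2\delta})$ convergence together with the Green formula (\ref{greenformula}) and lower semicontinuity on $\Omega_\delta$, and obtains the strong $L^1$ convergences (\ref{EQUAT12})--(\ref{EQUAT32}) only when $d=c_\Omega$, as a by-product of equality in the chain (\ref{estimatedcomega}). Your route is viable and even yields the statement ``$u_\Omega$ solves (\ref{limitproblem})'' for all $d\geq c_\Omega$, at the price of justifying the uniform-in-$n$ tail bound with $\lambda$ replaced by $\lambda_n\to\infty$ (large $n$ handled by the potential, the finitely many remaining indices by enlarging $R$) and the passage from $L^1\cap L^p$ convergence to convergence of the Nemytskii integrals.

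Two points need repair or completion. First, in your final squeeze the inequality $\Phi_\Omega(t_\Omega u_\Omega)\leq\Phi_\Omega(u_\Omega)$ is false in general: $t_\Omega$ maximizes the fibering map $t\mapsto\Phi_\Omega(tu_\Omega)$, so the opposite inequality holds, and $c_\Omega\leq\Phi_\Omega(u_\Omega)$ is not available before you know $u_\Omega\in\mathcal{N}_\Omega$. The correct chain is the paper's (\ref{estimatedcomega}): $c_\Omega\leq\Phi_\Omega(t_\Omega u_\Omega)=\int_\Omega\bigl(f(t_\Omega u_\Omega)t_\Omega u_\Omega-F(t_\Omega u_\Omega)\bigr)dx\leq\int_\Omega\bigl(f(u_\Omega)u_\Omega-F(u_\Omega)\bigr)dx\leq d$, using Lemma \ref{lemmaincreasing} and Fatou applied to (\ref{convergenciaf2}); when $d=c_\Omega$, the strict monotonicity in Lemma \ref{lemmaincreasing} forces $t_\Omega=1$ and $\Phi_\Omega(u_\Omega)=c_\Omega$, after which your argument for $\|u_n\|_{\lambda_n}\to\|u_\Omega\|_\Omega$ goes through. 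Second, the obstacle you flag --- that $\|u_\Omega\|_\Omega$, Hausdorff boundary term included, is controlled by $\liminf_n\|u_n\|_{\lambda_n}$ with no loss of variation mass at $\partial\Omega$ --- is precisely the content of the paper's Claim, and it is resolved there by applying the Green formula (\ref{greenformula}) to the zero extension (\ref{uoverline}), so that $\|u_\Omega\|_\Omega$ coincides with the $BV$-norm of $\overline{u_\Omega}$ computed on the open set $\Omega_\delta\supset\overline{\Omega}$, followed by lower semicontinuity on $\Omega_\delta$ and the vanishing of the cross term involving $\nabla\varphi_\delta$ over $\Omega_{2\delta}\backslash\Omega_\delta$. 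Your sketch names the right mechanism, but this step must actually be carried out (it is also where the regularity of $\partial\Omega$ implicitly enters, exactly as in the paper); once it is, your use of zero-extended test functions to pass to the limit in (\ref{PScinfinity}) and obtain (\ref{BVsolutionlimitproblem}) is the same device the paper employs.
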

\begin{proof}
First of all note that the arguments in Lemma \ref{unlambdabounded} imply that $d \geq 0$, since it holds
\begin{equation}
d + o_n(1) \geq C\|u_n\|_{\lambda_n}.
\label{dun}
\end{equation}

It follows also from (\ref{dun}) that $(\|u_n\|_{\lambda_n})$ is a bounded sequence in $\mathbb{R}$ and then that $(u_n)$ is bounded in $BV(\mathbb{R}^N)$. By the Sobolev embeddings, there exists $u_\Omega \in BV_{loc}(\mathbb{R}^N)$ such that $u_n \to u_\Omega$ in $L^q_{loc}(\mathbb{R}^N)$, for $1 \leq q < 1^*$. Moreover, it is possible to argue as in the last section in order to show that in fact $u_\Omega \in BV(\mathbb{R}^N)$.

Now let us show that $u_\Omega \equiv 0$ a.e. in $\mathbb{R}^N\backslash\Omega$. For each $m \in \mathbb{N}$, let us define $C_m = \{x \in \mathbb{R}^N; \, V(x) \geq 1/m\}$. Then note that
\begin{eqnarray*}
\int_{C_m}|u_n| dx & \leq & \frac{m}{\lambda_n}\int_{C_m}\lambda_n V(x)|u_n|dx\\
& \leq & \frac{m}{\lambda_n}\|u_n\|_{\lambda_n}\\
& = & o_n(1),
\end{eqnarray*}
since $(\|u_n\|_{\lambda_n})$ is a bounded sequence. Then, Fatou Lemma and the last inequality implies that
\begin{equation}
\int_{C_m}|u_\Omega| dx = 0.
\label{uCm}
\end{equation}
Hence, since $\mathbb{R}^N\backslash \Omega =  \bigcup_{i =1}^{+\infty}C_m$, it follows that
$$
\int_{\mathbb{R}^N\backslash \Omega}|u_\Omega| dx = 0
$$
and then that $u_\Omega = 0$ a.e. in $\mathbb{R}^N\backslash\Omega$.

If $d = 0$, then (\ref{dun}) imply that $\|u_n\|_{\lambda_n} \to 0$ as $n \to +\infty$ and nothing is left to prove.

If $d > 0$, since
$$
d + o_n(1) = \Psi_{\lambda_n}(u_n) \leq \|u_n\|_{\lambda_n},
$$
the same arguments in Lemma \ref{lemmaulambdanaonula} can be used to show that $u^\lambda \neq 0$.

Since $u_\Omega \neq 0$, there exists $t > 0$ such that $tu_\Omega \in \mathcal{N}_\Omega$. Let us prove that $t \in (0,1]$, what is implied by the following claim.

\noindent {\bf Claim.} $\displaystyle \Phi_\Omega'(u_\Omega)u_\Omega \leq 0$.

For $\delta > 0$, let $\varphi_\delta \in C^\infty(\mathbb{R}^N)$ be such that $\varphi_\delta \equiv 1$ in $\Omega_\delta$, $\varphi_\delta \equiv 0$ in $(\Omega_{2\delta})^c$ and $|\nabla \varphi_\delta|_\infty \leq C/\delta$, where by $\Omega_\sigma$ we mean the $\sigma-$neighborhood of $\Omega$, $\sigma > 0$. Let us define 
\begin{equation}
\overline{u_\Omega}(x) = \left\{
\begin{array}{rl}
u_\Omega(x), & \mbox{if $x \in \Omega$,}\\
0, &  \mbox{if $x \in \mathbb{R}^N\backslash \Omega$.}
\end{array} \right.
\label{uoverline}
\end{equation}
Note that $\overline{u_\Omega} \in BV(\mathbb{R}^N)$ and, by the Green Formula for $BV$ functions (see \cite{Buttazzo}[Theorem 10.2.1] for instance),
\begin{equation}
\int_{\Omega_\delta} |D\overline{u_\Omega}| + \int_{\Omega_\delta} |\overline{u_\Omega}|dx = \int_\Omega |Du_\Omega| + \int_\Omega |u_\Omega|dx + \int_{\partial\Omega} |u_\Omega|d\mathcal{H}_{N-1}.
\label{greenformula}
\end{equation}

As in the proof of Lemma \ref{lemmaNehari}, note that $\Psi_{\lambda_n}'(u_n)(\varphi_\delta u_n)$ is well defined and, by using $u_n + t\varphi_\delta u_n$ as test function in (\ref{PScinfinity}) and doing $t \to 0^\pm$, since $(u_n)$ is bounded in $BV(\mathbb{R}^N)$, it is possible to see that

\begin{equation}
\Psi_{\lambda_n}'(u_n)(\varphi_\delta u_n) = o_n(1).
\label{Psion1}
\end{equation}
Then by (\ref{Jlinha}) it follows that
\begin{eqnarray}
\nonumber \Psi_{\lambda_n}'(u_n)(\varphi_\delta u_n) & = & \int_{\mathbb{R}^N}\frac{((Du_n)^a)^2 \varphi_\delta + u_n(Du_n)^a \cdot \nabla \varphi_\delta}{|(Du_n)^a|}dx \\
\nonumber & & + \int_{\mathbb{R}^N}\frac{Du_n}{|Du_n|}\frac{\varphi_\delta(Du_n)^s}{|\varphi_\delta (Du_n)^s|}|\varphi_\delta(D u_n)^s| + \\
\nonumber & & + \int_{\mathbb{R}^N}(1 + \lambda_n V(x)) \mbox{sgn}(u_n)(\varphi_\delta u_n)dx - \int_{\mathbb{R}^N}f(u_n)\varphi_\delta u_n dx\\
\nonumber & = & \int_{\mathbb{R}^N}\varphi_\delta |(Du_n)^a|dx + \int_{\mathbb{R}^N}\frac{u_n(Du_n)^a \cdot \nabla\varphi_\delta}{|(Du_n)^a|}dx +\\
\nonumber & & + \int_{\mathbb{R}^N}\frac{(Du_n)^s}{|(Du_n)^s|}\frac{\varphi_\delta(Du_n)^s}{|\varphi_\delta (Du_n)^s|}|\varphi_\delta(D u_n)^s| + \int_{\mathbb{R}^N}(1 + \lambda_n V(x)) |u_n|\varphi_\delta dx -\\
& & - \int_{\mathbb{R}^N}f(u_n)\varphi_\delta u_n dx.
\label{estimateOmegadelta}
\end{eqnarray}

Since $u_n \to \overline{u_\Omega}$ in $L^q(\Omega_\delta)$ for $1 \leq q < 1^*$, by the lower semicontinuity of $\|\cdot\|_{BV(\Omega_\delta)}$ w.r.t. the $L^q(\Omega_\delta)$ convergence, (\ref{greenformula}), (\ref{Psion1}) and (\ref{estimateOmegadelta}), it follows that
\begin{eqnarray}
\nonumber \|u_\Omega\|_\Omega & \leq & \liminf_{n \to +\infty}\left(\int_{\Omega_\delta}|Du_n| + \int_{\Omega_\delta}|u_n|dx \right)\\
\nonumber & \leq & \liminf_{n \to +\infty}\left( \int_{\mathbb{R}^N}\varphi_\delta |(Du_n)^a|dx + \int_{\mathbb{R}^N}(1 + \lambda_n V(x)) |u_n|\varphi_\delta dx \right.\\
\nonumber & & + \left.\int_{\mathbb{R}^N}\frac{(Du_n)^s}{|(Du_n)^s|}\frac{\varphi_\delta(Du_n)^s}{|\varphi_\delta (Du_n)^s|}|\varphi_\delta(D u_n)^s|\right)\\
\nonumber& \leq & \limsup_{n \to +\infty}\left(\int_{\mathbb{R}^N}f(u_n)\varphi_\delta u_n dx - \int_{\Omega_{2\delta}\backslash\Omega_\delta}\frac{u_n(Du_n)^a \cdot \nabla\varphi_\delta}{|(Du_n)^a|}dx\right)\\
& = & \int_{\Omega_{2\delta}}f(u_\Omega)\varphi_\delta u_\Omega dx,
\label{estimatedeltatozero}
\end{eqnarray}
since 
$$
\int_{\Omega_{2\delta}\backslash\Omega_\delta}\frac{u_n(Du_n)^a \cdot \nabla\varphi_\delta}{|(Du_n)^a|}dx  \leq \frac{C}{\delta}\int_{\Omega_{2\delta}\backslash\Omega_\delta}|u_n|dx = o_n(1),
$$
By doing $\delta \to 0$ in (\ref{estimatedeltatozero}) it follows that $\Phi_\Omega'(u_\Omega)u_\Omega \leq 0$ and the Claim is proved.

Then there exists $t \in (0,1]$ such that $t u_\Omega \in \mathcal{N}_\Omega$. 

Note moreover that
\begin{equation}
d + o_n(1) = \Psi_{\lambda_n}(u_n) + o_n(1) = \Psi_{\lambda_n}(u_n) - \Psi'_{\lambda_n}(u_n)u_n = \int_{\mathbb{R}^N}\left(f(u_n)u_n - F(u_n)\right) dx.
\label{convergenciaf2}
\end{equation}
Applying Fatou Lemma in the last inequality together with Lemma \ref{lemmaincreasing}, we derive that
\begin{eqnarray}
\nonumber d & \geq &  \int_{\Omega}\left(f(u_\Omega)u_\Omega - F(u_\Omega)\right) dx\\
\nonumber & \geq & \int_{\Omega}\left(f(t u_\Omega)t u_\Omega - F(t u_\Omega)\right) dx\\
\nonumber & = & \Phi_\Omega(t u_\Omega) - \Phi'_\Omega(t u_\Omega)t u_\Omega\\
\nonumber & = & \Phi_\Omega(t u_\Omega)\\
& \geq & c_\Omega, 
\label{estimatedcomega}
\end{eqnarray}
what shows that $d \geq c_\Omega$.

Now let us suppose that $d = c_\Omega$. In this case, we can prove that $t = 1$, i.e., that $u_\Omega \in \mathcal{N}_\Omega$. This follows since in this case, from (\ref{estimatedcomega}) and the fact that $d = c_\Omega$, it follows that $t = 1$, $u_\Omega \in \mathcal{N}_\Omega$, $\Phi_\Omega(u_\Omega) = c_\Omega$, and from (\ref{convergenciaf2}),
\begin{equation} \label{EQUAT02}
f(u_n)u_n - F(u_n) \to f(u_\Omega)u_\Omega - F(u_\Omega) \quad \mbox{in} \quad L^{1}(\mathbb{R}^N).
\end{equation}
This limit together with $(f_4)$ imply that
\begin{equation} \label{EQUAT12}
f(u_n)u_n \to f(u_\Omega)u_\Omega \quad \mbox{in $L^1(\mathbb{R}^N)$}
\end{equation}
\begin{equation} \label{EQUAT22}
F(u_n) \to F(u_\Omega) \quad \mbox{in $L^1(\mathbb{R}^N)$}
\end{equation}
and
\begin{equation} \label{EQUAT32}
\|u_n\|_{\lambda_n} \to \|u_\Omega\|_\Omega.
\end{equation}

From (\ref{EQUAT12}), (\ref{EQUAT32}), by taking the $\limsup_{n \to +\infty}$ in $(\ref{PScinfinity})$ and noting that, for all $v \in BV(\Omega)$, if $\overline{v}$ is defined as in (\ref{uoverline}),
$$
\|v\|_\Omega = \|\overline{v}\|_{\lambda_n},
$$
it follows that 
$$
\|v\|_\Omega - \|u_\Omega\|_\Omega \geq \int_\Omega f(u_\Omega)(v - u_\Omega).
$$
Then $u_\Omega$ is a bounded variation solution of (\ref{limitproblem}).
\end{proof}

\subsection{Proof of Theorem \ref{maintheorem}}

Let us consider a sequence $\lambda_n \to +\infty$ as $n \to + \infty$ and, for each $n \in \mathbb{N}$, $u_n := u^{\lambda_n}$ the bounded variation solution of $(P)_{\lambda_n}$ obtained in Section \ref{sectionexistence}, which is such that $\Phi_{\lambda_n}(u_n) = c_{\lambda_n}$.

Note that, for a given $u \in BV(\Omega)$, denoting by $\overline{u}$ its extension by zero outside $\Omega$ (as in (\ref{uoverline})), it follows from Green Formula for $BV$ functions that
\begin{equation}
\int_{\mathbb{R}^N} |D\overline{u}| + \int_{\mathbb{R}^N} |\overline{u}|dx = \int_\Omega |Du| + \int_\Omega |u|dx + \int_{\partial\Omega} |u| d\mathcal{H}_{N-1}.
\label{greenformulaRN}
\end{equation}
Hence, if $u \in BV(\Omega)$, then $\overline{u} \in E_\lambda$ and $\Phi_\Omega(u) = \Psi_\lambda(\overline{u})$ for every $\lambda > 0$. Then, for each $\gamma \in \Gamma_\Omega$, it follows that $\overline{\gamma} \in \Gamma_\lambda$. Based on this fact, it is easy to see that
\begin{equation}
c_\lambda = \inf_{\gamma \in \Gamma_\lambda} \max_{t \in [0,1]} \Psi_\lambda(\gamma(t)) \leq \inf_{\gamma \in \Gamma_\Omega} \max_{t \in [0,1]} \Phi_\Omega(\gamma(t)) = c_\Omega,
\label{estimatedabove}
\end{equation}
for every $\lambda > 0$.

Then it follows that 
$$
(c_{\lambda_n})_{n \in \mathbb{N}} \subset [0,c_\Omega],
$$
which implies that, up to a subsequence, $\Psi_{\lambda_n}(u_{\lambda_n}) \to d \in [0,c_\Omega]$, as $n \to +\infty$. Since $u_n$ satisfies (\ref{PScinfinity}) with $\tau_n = 0$, it follows that $(u_n)$ is in fact a $(PS)_{d,\infty}$ sequence.

Note that by (\ref{estimateclambda}), $d > 0$. On the other hand, by Proposition \ref{(PS)inftycondition}, it holds that
\begin{equation}
d \geq c_\Omega.
\label{estimatedbelow}
\end{equation}

Then, from (\ref{estimatedabove}) and (\ref{estimatedbelow}) it follows that $(u_n)$ is a $(PS)_{c_\Omega,\infty}$-sequence and then, again by Proposition \ref{(PS)inftycondition} there exists $u_\Omega \in BV(\mathbb{R}^N)$ such that, up to a subsequence, $u_n \to u_\Omega$ in $L^q_{loc}(\mathbb{R}^N)$, for $1 \leq q < 1^*$, $u_\Omega \equiv 0$ a.e. in $\mathbb{R}^N\backslash\Omega$ and $u_\Omega$ is a bounded variation solution of (\ref{limitproblem}). Moreover,
$$
\|u_n\|_{\lambda_n} - \|u_\Omega\|_\Omega \to 0, \quad \mbox{as $n \to +\infty$}
$$
and Theorem \ref{maintheorem} is proved.

\vspace{0.3cm}
\noindent {\bf Acknowledgment: } C.O.Alves was partially supported by CNPq/Brazil 304804/2017-7. G. M. Figueiredo is supported by CNPq and FAPDF. M.T.O. Pimenta is supported by FAPESP 2017/01756-2.

\end{document}